\documentclass[english]{StyleFiles/llncs}
\usepackage[utf8]{inputenc}
\usepackage{amsmath, amssymb, bm, graphicx, mathdots, url}
\usepackage[hidelinks, colorlinks = true, allcolors=blue]{hyperref}

\pagestyle{plain}

\DeclareMathOperator{\ILP}{ILP}
\DeclareMathOperator{\rank}{rank}

\newcommand{\Z}{\mathbb{Z}}
\newcommand{\N}{\mathbb{N}}

\newcommand{\bA}{\bm{A}}
\newcommand{\bx}{\bm{x}}
\newcommand{\bc}{\bm{c}}
\newcommand{\bb}{\bm{b}}

\newcommand{\bv}{\bm{v}}
\newcommand{\bu}{\bm{u}}
\newcommand{\bM}{\bm{M}}
\newcommand{\bT}{\bm{T}}
\newcommand{\bD}{\bm{D}}
\newcommand{\bR}{\bm{R}}
\newcommand{\indet}{x}
\newcommand{\intvar}{a}
\newcommand{\intvarfixed}{a}
\newcommand{\polyring}{\Z[\indet]}
\newcommand{\polyS}{S}
\newcommand{\polyM}{\bM}

\title{On Matrices over a Polynomial Ring with Restricted Subdeterminants}

\author{Marcel Celaya\inst{1}
	 \and Stefan Kuhlmann\inst{2} 
	 \and Robert Weismantel\inst{2}}
	 
\institute{
	School of Mathematics, Cardiff University, Wales, United Kingdom
	\and Department of Mathematics, 
	 ETH Z\"{u}rich, Switzerland
}

\begin{document}
	\maketitle
	\thispagestyle{plain}
	
	\noindent
	\textbf{Abstract.} This paper introduces a framework to study discrete optimization problems which are parametric in the following sense: their constraint matrices correspond to matrices over the ring $\polyring$ of polynomials in one variable.
	We investigate in particular matrices whose subdeterminants all lie in a fixed set $S\subseteq\polyring$. Such matrices, which we call \emph{totally $S$-modular matrices}, are closed with respect to taking submatrices, so it is natural to look at minimally non-totally $S$-modular matrices which we call \emph{forbidden minors for $S$}. Among other results, we prove that if $S$ is finite, then the set of all determinants attained by a forbidden minor for $S$ is also finite. Specializing to the integers, we subsequently obtain the following positive complexity results: the recognition problem for totally $\pm\{0,1,a,a+1,2a+1\}$-modular matrices
	with $a\in\Z\backslash\lbrace -3,-2,1,2\rbrace$ and the integer linear optimization problem for totally $\pm\lbrace 0,a,a+1,2a+1\rbrace$-modular matrices with $a\in\Z\backslash\lbrace -2,1\rbrace$ can be solved in polynomial time.

	\section{Introduction}
	
	For a matrix $\bM$, let $\Delta(\bM)$ denote the maximal absolute value of a subdeterminant of $\bM$. Since decades it is well-known that the number $\Delta(\bM)$ plays a crucial role in understanding complexity questions related to integer programming problems whose associated constraint matrix is $\bM$. Important examples include bounds on the diameter of polyhedra \cite{bonisummaeisenbranddiameterpoly14,dadushhahnle2016shadowsimplex,narayananshahsri2021spectraldiameter}, questions about the proximity between optimal LP solutions and optimal integral solutions \cite{alievhenkoertel2020knapsackprox,celayakuhlpaatweis2023proxandflatness}, bounds on the size of the minimal support of integer vectors in standard form programs \cite{alievAverkovLoeraOertel21,alievdeloera2018supportintegeroptimal,alievdeloesparelindio2017,eisenbrandshmonincaratheodorybounds06}, and running time functions for dynamic programming algorithms \cite{eisenweissteinitz18}. Largely unexplored however remain two fundamental algorithmic questions: Given a finite set $S\subseteq \N$ of allowed values for the subdeterminants in absolute value of the matrix $\bM$:
	\begin{enumerate} 
		\item What is the complexity of solving an ILP with associated constraint matrix $\bM$ in dependence of $\left| S\right|$ and $\max_{s\in S} s$? (Optimization problem)
		\item How can we efficiently verify whether a matrix $\bM$ has all its subdeterminants in absolute value in $S$? (Recognition problem)
	\end{enumerate}
	If $S = \lbrace 0,1\rbrace$, both questions have been answered. Indeed, a famous theorem of Hoffmann and Kruskal \cite{hoffkruskal1956tustatement} and a decomposition theorem of Seymour for totally unimodular matrices \cite{SEYMOUR1980305} allow us to tackle both questions. We refer to \cite{schrijvertheorylinint86} for further theory concerning totally unimodular matrices. If $S=\lbrace 0,1,2\rbrace$, there exists a polynomial time algorithm to solve Question 1 \cite{artweiszenbimodalgo2017}. In this generality, further optimization results are not known to us. There are other important results when one imposes additional restrictions on the constraint matrix. For instance, if one assumes that each row of the constraint matrix contains at most two non-zero entries, there is a polynomial time algorithm for the optimization problem \cite{Fiorini2022IntegerPW}. A randomized polynomial time algorithm for the related integer feasibility question can be derived if the constraint matrix is $\lbrace 0,\Delta\rbrace$-modular for $\Delta\leq 4$ \cite{naegele2023advancesstrictlydelta,naegelesanzencongruence2021}. There are also polynomial time algorithms for the optimization problem if the constraint matrix is $\lbrace a,b,c\rbrace$-modular \cite{glanzer2022notes} for $a,b,c \in \N$.  
	
	The main reason why there are only few general results on optimization and recognition problems is due to the lack of understanding integral matrices with bounded subdeterminants. 
	The purpose of this paper is to develop a framework to investigate the structure of those matrices. Our point of departure is to consider matrices with entries being elements in $\polyring$, the ring of polynomials with integral coefficients in one indeterminate $\indet$. Moreover, we specify a set of polynomials $\polyS\subseteq\polyring$ that corresponds to the allowed polynomials for the subdeterminants of those matrices. All other polynomials in $\polyring\backslash \polyS$ are forbidden. Let us make this precise. 
	\begin{definition}
		\label{def_tot_S_mod_forbidden_minor}
		Let $\polyS\subseteq \polyring$ be finite. Let $1\leq m,n$. A matrix $\polyM\in \polyring^{m\times n}$ is \emph{totally $\polyS$-modular} if every $k\times k$ subdeterminant of $\polyM$ is contained in $\polyS$ for $1\leq k\leq \min \lbrace m,n\rbrace$. Let $2\leq l$. The matrix $\polyM\in \polyring^{l\times l}$ is a \emph{forbidden minor for} $\polyS$ if every $(l-1)\times (l-1)$ submatrix is totally $\polyS$-modular but $\det\polyM\notin \polyS$. By $F(\polyS)$, we denote the set of all polynomials that arise as a determinant of some forbidden minor for $\polyS$.
	\end{definition}
	The case $\Z$ can be recovered by restricting $\polyS$ to consist of constant polynomials. 
	One advantage of operating in $\polyring$ is that we can extract a certain decomposition, Theorem \ref{thm_linear_form_decomposition}, and simplify arguments due to arithmetic properties of $\polyring$. This allows us to make progress towards Question 1 and 2 if we evaluate the polynomials in $\polyM$ and $\polyS$ at integers. The disadvantage of our approach is that there are finitely many values of $\intvar\in\Z$ for which our statements do not hold. For those values $\intvar$, there exist a polynomial in $F(\polyS)$ and a polynomial in $\polyS$ whose evaluations at $\intvar$ admit the same value. This implies that $F(S(\intvar))$ cannot be the set of all polynomials that arise as a determinant of some forbidden minor for the set $S(\intvar)$ of evaluations of all polynomials of $\polyS$ at $\intvar$; cf. Lemma \ref{lemma_existence_equal_sets}. 
	
	Let us remark that there is a related line of research. It involves understanding the matroids that admit a representation as a $\pm\lbrace 0,1,2,\ldots,\Delta\rbrace$-modular matrix for $\Delta\geq 2$ \cite{oxleywalsh2022bimodular,paatstallknecht2024forbiddenminors}. We emphasize that there are crucial differences between this approach and the totally $S$-modular direction proposed in this work. For instance, here the notion of a forbidden minor depends on the concrete representation of the matrix.
	
	We will later be able to make general statements about the structure of the set $F(\polyS)$. However, when it comes to Questions 1 and 2, we need further restrictions to derive general statements. 
	
	\subsection{The smallest non-trivial cases}
	Given the requirements that $\indet\in S$ and $S$ is as small as possible, it turns out that the first non-trivial cases are given by matrices with associated sets $\polyS = \pm\lbrace 0,\indet,\indet + 1,2\indet + 1\rbrace$ and $\polyS = \pm\lbrace 0,1,\indet,\indet+1,2\indet+1\rbrace$. Let us next argue why this is the case. 
	We assume that $0\in \polyS$. This assumption can be made without loss of generality because, if $0\notin \polyS$, we are quite restricted; see, for instance, \cite{ARTMANN2016635}.
	So let $\polyS=\pm\lbrace 0, \indet\rbrace$. 
	One can check that there is no invertible totally $\polyS$-modular matrix in dimension larger than one. 
	Hence, we may assume that $\polyS$ contains at least three different polynomials, i.e., $\polyS=\pm\lbrace 0,\indet,y\rbrace$ for $y\in\polyring$. The only ways of choosing $y$ such that the set of totally $\polyS$-modular matrices contains invertible instances in every dimension is either $y = 1$ or $y = \indet+1$. Let $S = \pm \lbrace 0,\indet,\indet + 1\rbrace$. 
	After removing all-zero rows and columns, one can show that totally $S$-modular matrices $\bM$ are submatrices of the following matrix up to a sign, row and column permutations, and duplicates:
	\begin{align*}	
		\begin{pmatrix}
			\indet+1& \dots& \dots& \indet + 1 \\ \vdots& & \iddots& \indet \\ \vdots& \iddots& \iddots &\vdots  \\ \indet+1& \indet&\dots &\indet
		\end{pmatrix}.
	\end{align*}
	In other words, the rows and columns of $\bM$ can be totally ordered, or, 
	equivalently, $\bM$ excludes the submatrix
	\begin{align}
		\label{matrix_forbidden_conflict}
		\begin{pmatrix}
			\indet+1 & \indet\\ \indet & \indet+1
		\end{pmatrix}.
	\end{align}
	This implies that the matrix (\ref{matrix_forbidden_conflict}) is a forbidden minor for $\polyS$ with determinant $2\indet+1$.
	We therefore obtain a purely combinatorial description which does not take into account the knowledge about subdeterminants but still completely characterizes totally $\polyS$-modular matrices.  
	One of the advantages of this observation is that, after minor preprocessing, the recognition problem becomes surprisingly straightforward in large dimensions: Up to row and column permutations and multiplying with minus one, one can simply check whether the matrix has entries in $\lbrace \indet,\indet + 1\rbrace$ and the forbidden submatrix in (\ref{matrix_forbidden_conflict}) does not appear. 
	This leads to the natural question of what happens when we allow the matrix (\ref{matrix_forbidden_conflict}) to be present, i.e., we add $2\indet+1$ to the set $\polyS$. 
	This gives us $\polyS = \pm\lbrace 0,\indet,\indet+1,2\indet+1\rbrace$. As a next step, one can add the choice $y = 1$ from above to our current set to obtain $\polyS = \pm \lbrace 0,1,\indet,\indet + 1, 2\indet + 1\rbrace$.
	
	\subsection{Statements of our results}
	The statements are concerned with matrices that can be decomposed into $\polyM = \bT + \indet\cdot\bu\cdot\bv^\top$ for integral $\bT,\bu$, and $\bv$. To derive complexity results, we need to understand the matrix $\bT$ for a given set $\polyS$. The following decomposition result applies. Let $\polyS(a)$ denote the set of evaluations of all polynomials of $\polyS$ at $a\in\Z$.
	
	\begin{theorem}\label{thm_linear_form_decomposition}
		Let $\polyS\subseteq \polyring$ be finite. Let $\polyM=\bT + \indet\cdot\bu\cdot\bv^\top$ where $\bT,\bu,$ and $\bv$ are integral. If $\polyM$ is totally $\polyS$-modular, then $\bT$ is totally $\polyS(0)$-modular.

	\end{theorem}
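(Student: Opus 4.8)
The plan is to reduce the statement to a single observation about how the determinant of a square submatrix of $\polyM$ depends on the indeterminate $\indet$. Fix index sets $I\subseteq\{1,\dots,m\}$ and $J\subseteq\{1,\dots,n\}$ with $|I|=|J|=k$, and write $\polyM_{I,J}$ for the corresponding $k\times k$ submatrix; by construction $\polyM_{I,J}=\bT_{I,J}+\indet\cdot\bu_I\bv_J^\top$, where $\bu_I,\bv_J$ denote the subvectors of $\bu,\bv$ supported on $I,J$. The first step is to expand $\det\polyM_{I,J}$ by multilinearity in the columns: the $j$-th column of $\polyM_{I,J}$ equals the $j$-th column of $\bT_{I,J}$ plus $\indet$ times a scalar multiple of $\bu_I$, so the expansion is a sum over subsets $K\subseteq J$ of terms carrying a factor $\indet^{|K|}$, and whenever $|K|\geq 2$ the corresponding matrix has two columns lying in the line spanned by $\bu_I$ and hence vanishes. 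Thus
\begin{equation*}
	\det\polyM_{I,J}=\det\bT_{I,J}+\indet\cdot c_{I,J}
\end{equation*}
for some integer $c_{I,J}$; in particular $\det\polyM_{I,J}$ is an affine-linear polynomial in $\indet$ whose constant term is $\det\bT_{I,J}$.

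The second step is then immediate: since $\polyM$ is totally $\polyS$-modular, the polynomial $\det\polyM_{I,J}$ lies in $\polyS$, and evaluating the displayed identity at $\indet=0$ gives $\det\bT_{I,J}=(\det\polyM_{I,J})(0)\in\polyS(0)$. As $I$ and $J$ were arbitrary, every subdeterminant of $\bT$ lies in $\polyS(0)$, which is exactly the assertion that $\bT$ is totally $\polyS(0)$-modular.

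I do not anticipate a genuine obstacle here; the only point requiring a little care is the vanishing of the higher-order terms in the multilinear expansion, i.e. that an $\indet$-perturbation of $\bT_{I,J}$ by a rank-one matrix changes its determinant only linearly in $\indet$ (equivalently one could invoke the matrix-determinant lemma, being slightly careful since $\bT_{I,J}$ need not be invertible). Everything else is bookkeeping about passing from a submatrix of $\polyM$ to the corresponding submatrix of $\bT$ and recording that evaluation at $\indet=0$ sends $\polyS$ into $\polyS(0)$ by definition.
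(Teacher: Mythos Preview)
Your proof is correct and follows essentially the same approach as the paper: both arguments show that $\det\polyM_{I,J}$ is affine-linear in $\indet$ with constant term $\det\bT_{I,J}$, and then evaluate at $\indet=0$. The only cosmetic difference is that the paper packages the affine-linearity step as a citation of the matrix determinant lemma (valid even for singular $\bT_{I,J}$), whereas you derive it directly from multilinearity of the determinant; you in fact note this equivalence yourself.
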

	For instance, let $K_0\subseteq\N$ be finite and  $\polyS=\pm\lbrace k\indet-1,k\indet,k\indet+1: k\in K_0\rbrace$. Then Theorem \ref{thm_linear_form_decomposition} states that $\bT$ is totally unimodular.

	Theorem \ref{thm_linear_form_decomposition} can be significantly generalized. The assumption that $\polyM = \bT + \indet\cdot\bu\cdot\bv^\top$ can be replaced by $\polyM = \bT+\indet\cdot \polyM_1 + \ldots + \indet^l\cdot\polyM_l $ where $\polyM_1,\ldots,\polyM_l$ are arbitrary integral matrices for $l\in \N$. We refrain from proving this in detail. The reason is that we only use Theorem \ref{thm_linear_form_decomposition}, and its evaluated form Corollary~\ref{cor_decomposition_evaluation}, to derive statements about optimization and recognition, but even then Theorem \ref{thm_linear_form_decomposition} itself is not enough.
	Further properties of the matrix $\bT$ are required to tackle the following constraint parametric optimization problem over $\Z$ which is given in terms of a full-column-rank constraint matrix $\bM(\intvar)\in \Z^{m\times n}$ where every entry of $\bM(\intvar)$ is parametrized by a specific value $a\in\Z$: 
	\begin{align*}
		\ILP(\bM(\intvar),\bb,\bc): \max \bc^\top\bx \ \text{s.t.} \ \bM(\intvar)\bx\leq\bb, \ \bx\in\Z^n
	\end{align*}
	where $\bb$ is integral. 
	The following two results can be derived with the tools developed in this paper.
	\begin{theorem}\label{thm_recognition}
		Let $\intvar\in \Z\backslash\lbrace -3,-2,1,2\rbrace$ and $S(\intvar)=\pm\lbrace 0,1,\intvar,\intvar+1,2\intvar+1\rbrace$. Given a matrix $\bM(\intvar)\in \Z^{m\times n}$, one can decide in polynomial time whether $\bM(\intvar)$ is totally $S(\intvar)$-modular.
	\end{theorem}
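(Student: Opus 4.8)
\emph{Proof idea.}
The plan is to reduce the recognition of total $S(\intvar)$-modularity to two polynomial-time subroutines: recognizing total unimodularity, and checking a finite family of local conditions dictated by the structure theory of totally $\polyS$-modular matrices over $\polyring$ for $\polyS=\pm\lbrace 0,1,\indet,\indet+1,2\indet+1\rbrace$. Throughout we use that $S(\intvar)$ consists of the evaluations at $\intvar$ of the nine polynomials in $\pm\lbrace 0,1,\indet,\indet+1,2\indet+1\rbrace$, so that assertions about $\polyS$ carry over to $S(\intvar)$ except when two such evaluations collide with one another or with an element of $F(\polyS)$; since $F(\polyS)$ is finite (and can be computed explicitly for this $\polyS$), Lemma~\ref{lemma_existence_equal_sets} isolates $-3,-2,1,2$ as the only obstructing values. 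For $\intvar\in\lbrace -1,0\rbrace$ one has $S(\intvar)=\pm\lbrace 0,1\rbrace$ and the statement is just total unimodularity recognition, so from now on $\intvar\geq 3$ or $\intvar\leq -4$, in particular $|\intvar|\geq 3$.

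The first step is a canonical decomposition. Since $0,1,-1$ are pairwise incongruent modulo $\intvar$, every element of $S(\intvar)$ has a unique representation $t+\intvar n$ with $t\in\lbrace 0,\pm 1\rbrace$; going through the nine elements, the admissible pairs are exactly $(t,n)\in\lbrace (0,0),(\pm 1,0),(0,\pm 1),(1,1),(1,2),(-1,-1),(-1,-2)\rbrace$, so $|n|\leq 2$ and $|n|=2$ forces $|t|=1$. Given $\bM(\intvar)$ we reject unless all entries lie in $S(\intvar)$, and otherwise form $\bM(\intvar)=\bT+\intvar\bm{N}$ with $\bT\in\lbrace 0,\pm 1\rbrace^{m\times n}$ entrywise canonical and $\bm{N}$ integral with the above sign pattern. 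Next we show that $\bM(\intvar)$ totally $S(\intvar)$-modular forces $\rank\bm{N}\leq 1$: a nonzero $2\times 2$ minor of $\bm{N}$ would turn the corresponding $2\times 2$ minor of $\bM(\intvar)$ into a genuine quadratic $\alpha+\beta\intvar+\gamma\intvar^2$ in $\intvar$ with $\gamma\neq 0$ and $|\alpha|,|\beta|,|\gamma|$ bounded by absolute constants, which cannot lie in $S(\intvar)$ once $|\intvar|$ exceeds a small explicit threshold --- and the finitely many remaining values are handled directly, consistently with Lemma~\ref{lemma_existence_equal_sets}. Hence $\bm{N}=\bu\bv^\top$ with $\bu,\bv$ integral of entry size $\leq 2$, computable in polynomial time; applying the evaluation of Theorem~\ref{thm_linear_form_decomposition} at $\intvar$ (Corollary~\ref{cor_decomposition_evaluation}) to $\bM(\intvar)=\bT+\intvar\bu\bv^\top$ shows that $\bT$ is totally $S(0)$-modular, i.e.\ totally unimodular --- a property testable in polynomial time. (That the canonical $\bT$ agrees with the matrix supplied by the decomposition result is because the constant term of every element of $\polyS$ already lies in $\lbrace 0,\pm 1\rbrace$.)

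It remains to certify that the triple $(\bT,\bu,\bv)$ actually yields a totally $S(\intvar)$-modular matrix. By the matrix determinant lemma, the determinant of a $k\times k$ submatrix of $\bT+\intvar\bu\bv^\top$ equals $\det\bT'+\intvar\cdot\bv'^\top\operatorname{adj}(\bT')\bu'$, with $\det\bT'\in\lbrace 0,\pm 1\rbrace$ and every cofactor of $\bT'$ in $\lbrace 0,\pm 1\rbrace$; so, by the admissible-pairs analysis above, total $S(\intvar)$-modularity is equivalent to demanding for every square submatrix that $\bigl(\det\bT',\ \bv'^\top\operatorname{adj}(\bT')\bu'\bigr)$ be one of those nine pairs. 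The crux is to replace this a priori infinite family of conditions by finitely many locally checkable ones --- for instance a short list of forbidden configurations of $\bT$ relative to the supports of $\bu$ and $\bv$, in the spirit of the combinatorial characterization obtained for $\pm\lbrace 0,\indet,\indet+1\rbrace$ --- which can then be verified in polynomial time.

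The main obstacle is precisely this reduction: knowing only that $\bT$ is totally unimodular does not control the bilinear quantities $\bv'^\top\operatorname{adj}(\bT')\bu'$, so, as anticipated in the paper, Theorem~\ref{thm_linear_form_decomposition} alone does not suffice and one must bring in the finer structure theorem for totally $\polyS$-modular matrices over $\polyring$ to describe exactly how $\bT$, $\bu$, and $\bv$ may interact; once that description is available, the steps above assemble into a polynomial-time recognition algorithm.
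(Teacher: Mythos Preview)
Your proposal correctly identifies the overall architecture --- decompose $\bM(\intvar)=\bT+\intvar\,\bu\bv^\top$, use Lemma~\ref{lemma_existence_equal_sets} to transfer between $\polyS$ and $S(\intvar)$ away from $I(\polyS)$, and reduce to total unimodularity recognition --- but it stops precisely at the point that matters. You explicitly say that ``the main obstacle is precisely this reduction'' and that ``one must bring in the finer structure theorem \ldots\ once that description is available, the steps above assemble into a polynomial-time recognition algorithm.'' That is an admission that the proof is not complete: you have not supplied the characterization, and your suggested route via a ``short list of forbidden configurations of $\bT$ relative to the supports of $\bu$ and $\bv$'' is both unproven and, as it turns out, unnecessary.

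The missing idea is much simpler than a forbidden-configuration list. Writing $\bar{\bT}=\bT-\bu\bv^\top$, the matrix determinant lemma gives $\det\polyM=(\det\bT-\det\bar{\bT})\cdot\indet+\det\bT$ for every square submatrix, so $\det\polyM$ is determined by the pair $(\det\bT,\det\bar{\bT})$. If both $\bT$ and $\bar{\bT}$ are totally unimodular, running over all nine pairs in $\{0,\pm1\}^2$ yields exactly the set $\pm\{0,1,\indet,\indet+1,2\indet+1\}=\polyS$. Conversely, evaluating at $\indet=0$ and $\indet=-1$ gives $\det\bT$ and $\det\bar{\bT}$ respectively, and since $\polyS(0)=\polyS(-1)=\pm\{0,1\}$, total $\polyS$-modularity forces both $\bT$ and $\bar{\bT}$ to be totally unimodular. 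Thus the ``finer structure theorem'' you were looking for is simply: $\polyM=\bT+\indet\,\bu\bv^\top$ is totally $\polyS$-modular if and only if $\bT$ and $\bT-\bu\bv^\top$ are both totally unimodular. The recognition algorithm is then two calls to a TU recognizer, with no need to analyze $\bv'^\top\operatorname{adj}(\bT')\bu'$ directly or to enumerate forbidden configurations.
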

	\begin{theorem}\label{thm_opt_a_a_a+1}
	Let $\intvar\in \Z\backslash\lbrace -2,1\rbrace$ and $S(\intvar)=\pm\lbrace 0,\intvar,\intvar+1,2\intvar+1\rbrace$. Let $\bM(\intvar)\in \Z^{m\times n}$ have full column rank and be totally $S(\intvar)$-modular. Then one can solve $\ILP(\bM(\intvar),\bb,\bc)$ for integral $\bb$ and $\bc$ in polynomial time.
	\end{theorem}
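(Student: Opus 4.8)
The plan is to reduce $\ILP(\bM(\intvar),\bb,\bc)$, via the polynomial structure theory, to a constant number of integer programs over totally unimodular systems, each solved by linear programming \cite{hoffkruskal1956tustatement}. Begin with the degenerate parameters: if $\intvar\in\{-1,0\}$ then $S(\intvar)=\pm\{0,1\}$, so $\bM(\intvar)$ is totally unimodular and we are done by \cite{hoffkruskal1956tustatement}. So assume $\intvar\notin\{-2,-1,0,1\}$. Then the seven elements of $S(\intvar)$ are pairwise distinct, so every entry of $\bM(\intvar)$ lifts uniquely to a polynomial in $\polyS:=\pm\{0,\indet,\indet+1,2\indet+1\}$; and since $\intvar\notin\{-2,1\}$, no polynomial in $F(\polyS)$ evaluates at $\intvar$ to an element of $S(\intvar)$ — precisely the collision ruled out by the analysis around Lemma~\ref{lemma_existence_equal_sets} — so a minimal square submatrix of the lift with determinant outside $\polyS$ would be a forbidden minor for $\polyS$, contradicting total $S(\intvar)$-modularity. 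Hence the lift $\polyM$ is totally $\polyS$-modular and $\polyM(\intvar)=\bM(\intvar)$.

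Write $\polyM=\bT+\indet\cdot\polyM_{1}$ with $\bT$ the matrix of constant terms. By Theorem~\ref{thm_linear_form_decomposition} (equivalently its evaluated form Corollary~\ref{cor_decomposition_evaluation}), and since $\polyS(0)=\pm\{0,1\}$, the matrix $\bT$ is totally unimodular. Comparing entrywise with $\polyS$ shows $\polyM_{1}$ is rank one, say $\polyM_{1}=\bu\bv^{\top}$ with entries in $\{0,\pm1,\pm2\}$. The finer structural description of totally $\polyS$-modular matrices — refining the totally ordered (\emph{staircase}) picture available when $2\indet+1$ is dropped from $\polyS$, and in particular classifying the columns of $\bM(\intvar)$ proportional to $\bu$, which are exactly the ones carrying a $2\indet+1$-entry — shows that, after standard preprocessing (deleting zero rows and columns, merging duplicate columns, and unimodular column operations, which change the optimum only by possibly exposing unboundedness), one may assume $\bv\in\{0,\pm1\}^{n}$ and, crucially, that $\bigl(\begin{smallmatrix}\bT\\\bv^{\top}\end{smallmatrix}\bigr)$ is again totally unimodular; throughout, $\bM(\intvar)=\bT+\intvar\cdot\bu\bv^{\top}$.

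Now reduce by guessing $\beta:=\bv^{\top}\bx$. For $\beta\in\Z$ the residual problem $\ILP_{\beta}:=\max\{\bc^{\top}\bx:\ \bT\bx\le\bb-\intvar\beta\bu,\ \bv^{\top}\bx=\beta,\ \bx\in\Z^{n}\}$ has the totally unimodular constraint matrix $\bigl(\begin{smallmatrix}\bT\\\bv^{\top}\\-\bv^{\top}\end{smallmatrix}\bigr)$ and integral right-hand side, hence its optimum equals that of its linear relaxation $\mathrm{LP}_{\beta}$ and is computable in polynomial time, and $\ILP_{\beta}$ is feasible exactly when $\mathrm{LP}_{\beta}$ is. Since $\bM(\intvar)\bx=\bT\bx+\intvar(\bv^{\top}\bx)\bu$, the optimum of $\ILP(\bM(\intvar),\bb,\bc)$ equals $\max_{\beta\in\Z}\mathrm{opt}(\ILP_{\beta})$, with analogous statements for feasibility and unboundedness. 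To localize $\beta$, solve the linear relaxation of $\ILP(\bM(\intvar),\bb,\bc)$: one reads off infeasibility (reporting infeasibility also if the projection onto $\bv^{\top}\bx$ contains no integer) and unboundedness (translate an integral recession ray of positive $\bc$-value), and otherwise obtains an optimal vertex $\bx^{*}$ and sets $\beta^{*}=\bv^{\top}\bx^{*}$. As $\beta\mapsto\mathrm{opt}(\mathrm{LP}_{\beta})$ is a concave function on $\R$ — the value function of a parametric linear program with fixed constraint matrix — with maximum $\bc^{\top}\bx^{*}$ attained at $\beta^{*}$, its maximum over $\Z$ is attained at $\lfloor\beta^{*}\rfloor$ or $\lceil\beta^{*}\rceil$. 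Solving $\ILP_{\lfloor\beta^{*}\rfloor}$ and $\ILP_{\lceil\beta^{*}\rceil}$ and returning the better feasible solution solves $\ILP(\bM(\intvar),\bb,\bc)$; only a constant number of linear and totally unimodular integer programs are solved.

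The main obstacle is the structural step: proving that total $S(\intvar)$-modularity forces the rank-one shape $\bT+\intvar\bu\bv^{\top}$ with $\bT$ totally unimodular and, after preprocessing, with $\bv\in\{0,\pm1\}^{n}$ and $\bigl(\begin{smallmatrix}\bT\\\bv^{\top}\end{smallmatrix}\bigr)$ totally unimodular. This genuinely uses the detailed shape of $\bT$ and a classification of the columns of $\bM(\intvar)$ — in particular the columns proportional to $\bu$ that carry the $2\indet+1$-entries, the new phenomenon relative to $\pm\{0,\indet,\indet+1\}$ — rather than merely that $\bT$ is totally unimodular. A secondary but delicate point is the collision analysis behind Lemma~\ref{lemma_existence_equal_sets} that pins down $\{-2,1\}$ as the excluded parameters. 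Everything after the structural step is a routine combination of linear programming with the integrality of totally unimodular systems.
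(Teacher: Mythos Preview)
Your plan shares the paper's opening moves---lift to $\polyring$, use Lemma~\ref{lemma_existence_equal_sets} and Remark~\ref{rem_intersections_1_x_x+1_2x+1} to conclude the lift is totally $\polyS$-modular, read off the rank-one decomposition $\bT+\indet\,\bu\bv^\top$ with $\bT$ totally unimodular---and then diverges. The paper introduces \emph{two} integer variables, $\bx_1$ (the variable attached to the unique column that may consist entirely of $2\intvar+1$) and $y=\mathbf{1}^\top\tilde\bx$, and appeals to Lenstra's algorithm in fixed dimension; its key technical ingredient is Theorem~\ref{thm_append_all_one}, which shows that adjoining an all-ones row to $\bT_{\cdot,\backslash 1}$ preserves total unimodularity. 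Your route instead fixes only the single integer $\beta=\bv^\top\bx$ and uses concavity of the parametric LP value to localize $\beta$ to $\{\lfloor\beta^*\rfloor,\lceil\beta^*\rceil\}$, which would be more elementary than Lenstra---\emph{provided} the structural claim you flag as ``the main obstacle'' actually holds.

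The gap is precisely there. After the normalizations you and the paper both perform, one may still be left with a single column of $\bM(\intvar)$ equal to $(2\intvar+1)\mathbf{1}$; this does occur, e.g.\ $\bigl(\begin{smallmatrix}2\intvar+1&\intvar\\2\intvar+1&\intvar+1\end{smallmatrix}\bigr)$ is totally $S(\intvar)$-modular with full column rank. That column forces the corresponding entry of $\bv$ to equal $2$, so $\bigl(\begin{smallmatrix}\bT\\\bv^\top\end{smallmatrix}\bigr)$ has an entry $2$ and cannot be totally unimodular. Your proposed fix---``unimodular column operations'' to push $\bv$ into $\{0,\pm1\}^n$---does change $\bv$, but simultaneously replaces $\bT$ by $\bT E$, and total unimodularity is not preserved under right multiplication by a unimodular $E$; you give no argument that the resulting $\bigl(\begin{smallmatrix}\bT E\\\bv^\top E\end{smallmatrix}\bigr)$ is TU, and invoking a ``finer structural description'' does not supply one. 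The paper sidesteps this entirely by peeling that column off as the separate integer variable $\bx_1$; with two integer parameters your one-dimensional concavity trick no longer localizes the search (the integer maximizer of a concave function on $\Z^2$ need not lie among the four roundings of the real maximizer), which is exactly why Lenstra enters. So either you must prove the stronger structural claim you assert---a genuine strengthening of Theorem~\ref{thm_append_all_one} that also absorbs the $(2\intvar+1)$-column---or you must revert to a two-variable reduction along the paper's lines.
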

	
	\section{Tools}\label{sec_tools}
	Throughout this paper we work with the lexicographical order of $\polyring$ which is defined by $s < t$ if and only if the leading coefficient of $t - s$ is positive. With respect to this ordering we further define the absolute value of $s\in \polyring$ to be
	\begin{align*}
		|s| = \begin{cases}
			s, & \text{if }s \geq 0,\\
			-s, & \text{otherwise} 
		\end{cases}.
	\end{align*}
	Also, we interchangeably pass from polynomials $s\in \polyring$ to polynomial functions, denoted by $s(a)$, when evaluating polynomials. This can be done since the polynomial function is uniquely determined by the polynomial over $\polyring$; see \cite[Chapter~4]{langalgebra}. 
	Let $I\subseteq \lbrack n\rbrack$ and $J \subseteq \lbrack n\rbrack$. We denote by $\bM_{\backslash I,\backslash J}$ the submatrix of $\bM$ without the rows indexed by $I$ and columns indexed by $J$. In what follows, the notation $\bA_{(k)}\subseteq\bM$ is shorthand for specifying that $\bA_{(k)}$ is a $k\times k$ submatrix of $\bM$. 
Given a submatrix $\bA$ of $\bM$ and $i,j\in\lbrack n\rbrack$, we denote by $\bA[i,j]$ the submatrix of $\bM$ that contains $\bA$ along with the extra row and column indexed by $i$ and $j$ respectively.

	We now show two results that can be viewed as a generalization of the well-known fact that every matrix that is not totally unimodular and has entries in $\pm\lbrace 0,1\rbrace$ contains a submatrix with determinant two in absolute value. For that purpose, we utilize a well-known determinant identity from linear algebra which is due to Sylvester \cite{sylvester1851relationminors} and commonly referred to as \emph{Sylvester's determinant identity}. By convention, the determinant of an empty matrix is 1.
	\begin{lemma}[Sylvester's determinant identity]
		\label{lemma_sylvester}
		Let $\bM\in \polyring^{n\times n}$ for $n\geq 2$ and let $\bA_{(k)}\subseteq \bM$ for $k=0,\ldots,n$ with $\bA_{(k)} = \bM_{\backslash I,\backslash J}$ for the ordered sets $I=\lbrace i_1,\ldots,i_{n-k}\rbrace$ and $J=\lbrace j_1,\ldots, j_{n-k}\rbrace$. Then we get
		\begin{align*}
			\det\bM\cdot(\det\bA_{(k)})^{n-1-k} = \det\begin{pmatrix}
				\det \bA_{(k)}[i_1,j_1]& \ldots & \det \bA_{(k)}[i_1,j_{n-k}] \\
				\vdots & \ddots & \vdots \\
				\det \bA_{(k)}\lbrack i_{n-k}, j_1\rbrack & \ldots & \det \bA_{(k)}\lbrack i_{n-k},j_{n-k}\rbrack
			\end{pmatrix}.
		\end{align*}
	\end{lemma}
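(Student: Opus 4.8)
The plan is to reduce to the situation where $\bA_{(k)}$ is the leading principal $k\times k$ submatrix of $\bM$ and then read the identity off the Schur complement formula for a block determinant. First I would permute the rows of $\bM$ so that the rows outside $I$ come first (in increasing order) followed by $i_1,\dots,i_{n-k}$, and likewise the columns using $J$; this multiplies $\det\bM$ by the product $\varepsilon$ of the signs of the two permutations, while each $(k+1)\times(k+1)$ matrix $\bA_{(k)}[i_p,j_q]$ becomes, up to a sign $\alpha_p\beta_q$, the submatrix of the permuted matrix obtained from its leading $k\times k$ block by appending row $k+p$ and column $k+q$. A short inversion count shows $\varepsilon=\bigl(\prod_p\alpha_p\bigr)\bigl(\prod_q\beta_q\bigr)$, so these signs cancel once one takes the determinant of the $(n-k)\times(n-k)$ matrix of minors, and it suffices to prove the statement when $\bA_{(k)}$ already sits in the top-left corner.

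In that case write $\bM=\left(\begin{smallmatrix}\bA&\bB\\\bC&\bD\end{smallmatrix}\right)$ with $\bA=\bA_{(k)}$, and let $\bb_q$ be the $q$-th column of $\bB$, $\bc_p$ the $p$-th row of $\bC$, and $d_{pq}$ the $(p,q)$-entry of $\bD$. Assume for the moment that $\det\bA$ is invertible. The factorization $\left(\begin{smallmatrix}\bA&\bB\\\bC&\bD\end{smallmatrix}\right)=\left(\begin{smallmatrix}\bA&\bm 0\\\bC&\bm I\end{smallmatrix}\right)\left(\begin{smallmatrix}\bm I&\bA^{-1}\bB\\\bm 0&\bD-\bC\bA^{-1}\bB\end{smallmatrix}\right)$ gives $\det\bM=\det\bA\cdot\det(\bD-\bC\bA^{-1}\bB)$, and the same identity applied to $\bA_{(k)}[i_p,j_q]=\left(\begin{smallmatrix}\bA&\bb_q\\\bc_p&d_{pq}\end{smallmatrix}\right)$ gives $\det\bA_{(k)}[i_p,j_q]=\det\bA\cdot\bigl(d_{pq}-\bc_p\bA^{-1}\bb_q\bigr)$. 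Hence the matrix on the right-hand side of the claimed identity equals $\det\bA$ times the Schur complement $\bD-\bC\bA^{-1}\bB$, so its determinant is $(\det\bA)^{n-k}\det(\bD-\bC\bA^{-1}\bB)=(\det\bA)^{n-k}\cdot\det\bM/\det\bA=(\det\bA)^{n-1-k}\det\bM$, which is exactly the assertion.

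To finish, I would discharge the invertibility hypothesis by a specialization argument: both sides of the identity are polynomials with integer coefficients in the $n^2$ entries of $\bM$; replacing those entries with independent indeterminates, the computation above is valid over the fraction field of the resulting polynomial ring, where $\det\bA$ is a nonzero polynomial, so the identity holds in the polynomial ring itself and therefore under every evaluation of the indeterminates in $\polyring$. The block factorization and the specialization step are routine; the one place that needs real care is the sign accounting in the reduction — checking that the permutation signs attached to $\det\bM$ are matched by the per-row and per-column signs picked up by the entries of the minor matrix. (An alternative that avoids some of this is to first establish the Desnanot--Jacobi special case $n-k=2$ via Jacobi's theorem on the minors of the adjugate and then bootstrap in $k$, but the direct route above is shorter.)
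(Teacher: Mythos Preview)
Your argument is correct: the reduction to the top-left block via row/column permutations, the Schur complement computation, and the specialization to generic entries together constitute a standard and complete proof of Sylvester's identity. The only delicate point you flag yourself---the sign bookkeeping in the permutation step---does indeed work out, since pulling the row sign $\alpha_p$ from each entry of row $p$ and the column sign $\beta_q$ from each entry of column $q$ of the minor matrix multiplies its determinant by $\prod_p\alpha_p\prod_q\beta_q=\varepsilon$, matching the sign picked up by $\det\bM$.

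There is nothing to compare against: the paper does not prove this lemma at all. It quotes Sylvester's determinant identity as a classical result with a reference to Sylvester's original paper and immediately specializes to the Desnanot--Jacobi case $k=n-2$. Your write-up therefore supplies what the paper deliberately omits; the Schur-complement route you take is the textbook one, and your parenthetical remark about bootstrapping from Desnanot--Jacobi via Jacobi's adjugate theorem is another well-known alternative.
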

	For the most part, we work with the special case when $k = n - 2$, which is also known as the \emph{Desnanont-Jacobi identity}. In this case, we get the equation
	\begin{align}\label{eq_desnanont_jacobi}
		\det\bM\cdot\det\bA_{(k)} = \det\begin{pmatrix}
			\det \bA_{(k)}[i_1,j_1]& \det \bA_{(k)}[i_1,j_2] \\
			\det \bA_{(k)}\lbrack i_2, j_1\rbrack & \det \bA_{(k)}\lbrack i_2,j_2\rbrack
		\end{pmatrix}
	\end{align}
	for $I=\lbrace i_1,i_2\rbrace$ and $J=\lbrace j_1,j_2\rbrace$. 
	This identity already implies our first bound:
	
	\begin{lemma}\label{lemma_finite_number_of_determinants}
		Let $\polyS\subseteq \polyring$ be finite. Then the set $F(\polyS)$ of determinants attained by the forbidden minors of $\polyS$ is finite and 
		\begin{align*}
			\max_{x\in F(\polyS)}|x|\leq 2\cdot \max_{s\in \polyS}s^2.
		\end{align*} 
	\end{lemma}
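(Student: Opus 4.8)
The plan is to extract everything from the Desnanont--Jacobi identity~(\ref{eq_desnanont_jacobi}) applied to a forbidden minor $\polyM\in\polyring^{l\times l}$ for $\polyS$. First I would dispose of the degenerate case $\det\polyM=0$, which contributes only the value $0$ to $F(\polyS)$; so assume $\det\polyM\neq 0$. Then $\polyM$ has full rank $l$ over $\Q(\indet)$, so it has a nonsingular $(l-2)\times(l-2)$ submatrix $\bA=\polyM_{\backslash I,\backslash J}$ with $I=\{i_1,i_2\}$ and $J=\{j_1,j_2\}$: take $l-2$ of its (necessarily linearly independent) rows, then $l-2$ linearly independent columns of the resulting $(l-2)\times l$ block. (For $l=2$ this is the empty matrix, with $\det\bA=1$.)

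Now I would invoke~(\ref{eq_desnanont_jacobi}) with $k=l-2$:
\[
\det\polyM\cdot\det\bA \;=\; \det\bA[i_1,j_1]\cdot\det\bA[i_2,j_2]\;-\;\det\bA[i_1,j_2]\cdot\det\bA[i_2,j_1].
\]
Each of the four matrices $\bA[i_a,j_b]$ is an $(l-1)\times(l-1)$ submatrix of $\polyM$, hence totally $\polyS$-modular by the definition of a forbidden minor, so the four determinants on the right all lie in $\polyS$. For the bound, multiplicativity and the triangle inequality of $|\cdot|$ on $\polyring$, together with $|r|\,|r'|\le\max(|r|,|r'|)^2\le\max_{s\in\polyS}s^2$, show the right-hand side has absolute value at most $2\max_{s\in\polyS}s^2$; and since $\det\bA$ is a \emph{nonzero} element of $\polyring$ we have $|\det\bA|\ge 1$ (a nonzero integer has absolute value at least one, and a polynomial of positive degree has $|\cdot|$ of positive degree with positive leading coefficient, hence exceeds $1$). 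Therefore $|\det\polyM|\le|\det\polyM|\cdot|\det\bA|\le 2\max_{s\in\polyS}s^2$.

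For finiteness I would read the same equation as expressing $\det\polyM$ as a quotient $(s_1s_2-s_3s_4)/t$ with $s_1,\dots,s_4\in\polyS$ and $t=\det\bA$ in the finite set $\polyS\cup\{1\}$ (it is an $(l-2)\times(l-2)$ subdeterminant of a totally $\polyS$-modular $(l-1)\times(l-1)$ submatrix of $\polyM$ when $l\ge 3$, and $1$ when $l=2$). Since $\polyring$ is an integral domain, each admissible tuple $(s_1,\dots,s_4,t)$ with $t\neq 0$ determines at most one $q\in\polyring$ with $t\,q=s_1s_2-s_3s_4$; hence $F(\polyS)$ is contained in the finite set $\{0\}\cup\{\,q\in\polyring:\ t\,q=s_1s_2-s_3s_4\ \text{for some}\ s_1,\dots,s_4\in\polyS,\ t\in(\polyS\cup\{1\})\setminus\{0\}\,\}$.

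I expect the only genuinely delicate point to be the finiteness claim: the absolute-value bound alone does \emph{not} force $F(\polyS)$ to be finite, because, for instance, $\indet-N$ has absolute value strictly below $2\indet^2$ for every $N\in\N$. What actually pins $\det\polyM$ down is the divisibility relation $t\cdot\det\polyM=s_1s_2-s_3s_4$ with all of $s_1,\dots,s_4,t$ ranging over finite sets. Everything else is a routine application of~(\ref{eq_desnanont_jacobi}) together with elementary properties of the lexicographic order on $\polyring$ (and the edge case $\polyS=\varnothing$, equivalently $F(\polyS)=\varnothing$, is vacuous).
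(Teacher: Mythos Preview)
Your proposal is correct and follows essentially the same approach as the paper: apply the Desnanont--Jacobi identity~(\ref{eq_desnanont_jacobi}) to an invertible $(l-2)\times(l-2)$ submatrix of the forbidden minor, observe that the four $(l-1)\times(l-1)$ subdeterminants on the right lie in $\polyS$, and read off both the bound (via $|\det\bA|\ge 1$ and the triangle inequality) and finiteness (via the finitely many possible quotients). Your treatment is in fact slightly more careful than the paper's on two minor points: you explicitly include $1$ in the range of $t=\det\bA$ to cover the $l=2$ case where the submatrix is empty, and you correctly stress that finiteness genuinely requires the divisibility relation rather than the lexicographic bound alone.
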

	\begin{proof}
		Select some forbidden minor $\polyM$ for $\polyS$ such that $\det\polyM\neq 0$. 
		There exists an invertible submatrix $\bA_{(n-2)}\subseteq \bM$. By the Desnanot-Jacobi identity (\ref{eq_desnanont_jacobi}) applied to $\bA_{(n-2)}$, we obtain that 
		\begin{align*}
			\det\polyM = \frac{1}{\det\bA_{(n-2)}}\cdot\left( s_1s_2-s_3s_4\right),
		\end{align*}
		where $s_i\in \polyS$ for $i=1,2,3,4$. Since $\det\bA_{(n-2)}\in S$, the right hand side only attains finitely many values as $\polyS$ is finite. Hence, there are only finitely many values possible for $\det\polyM$. 
		To obtain the inequality, we take absolute values on both sides, apply the triangle inequality, and observe that $\left|s_i\right|\leq \max_{s\in \polyS}\left|s\right|$. The claim follows then from $1\leq \left|\det\bA_{(n-2)}\right|$.\qed
	\end{proof}

	We immediately obtain that every forbidden minor for totally $\pm\lbrace 0, 1,\ldots,\Delta\rbrace$-modular matrices over $\Z$ admits a determinant bounded by $2\Delta^2$ in absolute value. The bound in Lemma \ref{lemma_finite_number_of_determinants} is tight: Let $\polyS\subseteq\polyring$ be finite such that $s\in \polyS$ implies $-s\in \polyS$ and $\lbrace 0\rbrace \neq \polyS$. Select $\tau = \max_{s\in S}\left|s\right|$. Then
	\begin{align*}
		\begin{pmatrix} \tau & \tau \\ -\tau & \tau\end{pmatrix}
	\end{align*}
	is a forbidden minor for $S$ and has determinant $2\tau^2$. One can strengthen the bound in Lemma \ref{lemma_finite_number_of_determinants} if the dimension is sufficiently large and $S\subseteq \Z$. 
	\begin{theorem}\label{thm_bound_minor}
		Let $S\subseteq \Z$ be finite. Let $\tau = \max_{s\in S}\left|s\right|$. Given a forbidden minor $\bM\in\Z^{n\times n}$ for $S$ of dimension $n\geq\left\lceil \log_2\tau + 1\right\rceil$, then
		\begin{align*}
			\left|\det\bM\right| \leq 2\cdot\lceil\log_2\tau + 1\rceil\cdot\tau.
		\end{align*}
	\end{theorem}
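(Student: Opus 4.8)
The plan is to reduce the whole estimate to a single application of the Desnanot--Jacobi identity \eqref{eq_desnanont_jacobi} once a suitably large subdeterminant of $\bM$ of order close to $n$ has been located; the work is all in producing that subdeterminant. If $\det\bM=0$ the bound is trivial, so assume $\det\bM\neq 0$; we may also assume $\tau\geq 1$, since $\tau=0$ means $S\subseteq\{0\}$, which admits no forbidden minor at all (an $(l-1)\times(l-1)$ minor cannot vanish identically while the $l\times l$ determinant is nonzero). Since $\bM$ is nonsingular and $n\geq 2$, it has a nonsingular $(n-2)\times(n-2)$ submatrix; fix one, $\bA_{(n-2)}$, with $\mu:=\left|\det\bA_{(n-2)}\right|$ maximal. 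Every $(n-1)\times(n-1)$ minor of $\bM$ lies in $S$, hence has absolute value at most $\tau$, so applying \eqref{eq_desnanont_jacobi} to $\bA_{(n-2)}$ and taking absolute values gives
\begin{align*}
	\left|\det\bM\right|\cdot\mu\leq 2\tau^2,\qquad\text{i.e.}\qquad \left|\det\bM\right|\leq\frac{2\tau^2}{\mu}.
\end{align*}
Thus it suffices to show $\mu\geq\tau/\lceil\log_2\tau+1\rceil$; equivalently (a cofactor expansion along the sparsest row gives $\left|\det\bM\right|\le(\min_i\|\text{row}_i\|_1)\cdot\max\{(n-1)\text{-minors}\}$), it suffices to show that the subdeterminants of $\bM$ do not drop by more than a factor of order $\log_2\tau$ when passing from order $n$ to order $n-1$ or $n-2$.

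To control this drop I would introduce a flag of nonsingular submatrices $\bA_{(1)}\subseteq\bA_{(2)}\subseteq\dots\subseteq\bA_{(n-1)}\subseteq\bA_{(n)}=\bM$, with $\bA_{(k)}$ of order $k$, each $\bA_{(k)}$ chosen as a maximum-determinant nonsingular $k\times k$ submatrix of $\bA_{(k+1)}$. Writing $d_k:=\left|\det\bA_{(k)}\right|$ we have $d_0=1$, $d_n=\left|\det\bM\right|$, and, crucially, $1\leq d_k\leq\tau$ for $1\leq k\leq n-1$ because these are proper minors of $\bM$. Applying \eqref{eq_desnanont_jacobi} inside $\bA_{(k+1)}$ with core $\bA_{(k-1)}$ and using the maximality of $\bA_{(k)}$ yields $d_{k-1}d_{k+1}\leq 2d_k^2$ for $1\leq k\leq n-1$; equivalently, the sequence $\log_2 d_0,\log_2 d_1,\dots,\log_2 d_n$ is ``almost convex'', its forward differences increasing by at most $1$ per step. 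The hypothesis $n\geq\lceil\log_2\tau+1\rceil$ enters through the observation that $\log_2 d_k$ is confined to the width-$\log_2\tau$ band $[0,\log_2\tau]$ for the $n-1\geq\lceil\log_2\tau\rceil$ indices $k=1,\dots,n-1$. Heuristically, each time the running determinant doubles along the flag it consumes one unit of this budget; once it is exhausted the determinant can grow no further, so $\left|\det\bM\right|$ cannot exceed $\tau$ by more than a factor of order $\log_2\tau$.

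The genuinely delicate step is precisely this last one: extracting a factor of order $\log_2\tau$ rather than a factor of order $\sqrt n$, the latter being all that Hadamard's inequality applied to $\mathrm{adj}(\bM)$ gives and being far too weak once $n$ exceeds $\log_2\tau$ substantially. A workable route is a dichotomy: if some $d_k$ with $k\geq n-\lceil\log_2\tau+1\rceil$ already satisfies $d_k\geq\left|\det\bM\right|/\bigl(2\lceil\log_2\tau+1\rceil\bigr)$, then one application of \eqref{eq_desnanont_jacobi} (or of Sylvester's identity, Lemma~\ref{lemma_sylvester}) between levels $n$ and $k$ finishes the proof since $d_k\leq\tau$; otherwise the subdeterminants along the flag drop by more than a factor of $2$ at most of the top $\lceil\log_2\tau+1\rceil$ levels, which together with the recursion $d_{k-1}d_{k+1}\leq 2d_k^2$ and $d_1\geq 1$ caps how large $d_n$ could have been. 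Pinning the case split down so that the resulting constant is exactly $2\lceil\log_2\tau+1\rceil$ is the main obstacle; everything else is bookkeeping with the Desnanot--Jacobi identity and the uniform bound $d_k\leq\tau$.
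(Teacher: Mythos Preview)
Your outline is on the right track, but as you yourself flag, the final step is unfinished, and in fact neither of your two reductions quite closes. Let me indicate where the trouble lies and how the paper resolves it.

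Your first reduction, ``it suffices to show $\mu\geq\tau/\lceil\log_2\tau+1\rceil$'' with $\mu$ the maximal $(n-2)$--minor, is not the route the paper takes and does not seem provable in isolation: nothing prevents the $(n-2)$--minors from being much smaller than $\tau$, and the recursion you derive only compares \emph{adjacent} levels of the flag. Your second reduction via the flag $d_0,\dots,d_n$ with $d_{k-1}d_{k+1}\leq 2d_k^2$ is correct as an inequality, but the flag is the wrong object for the endgame: if you eventually apply Sylvester's identity to $\bA_{(k)}\subseteq\bM$ for some $k<n-2$, the $(k+1)$--minors $\bA_{(k)}[i,j]$ that appear are submatrices of $\bM$, not of $\bA_{(k+2)}$, so they are bounded by the \emph{global} maximum $\Delta_{k+1}$ rather than by your $d_{k+1}$. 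The ratio you controlled along the flag is therefore not the one you need.

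The paper dispenses with the flag entirely and works with the global quantities $\Delta_k=\max\{|\det\bA_{(k)}|:\bA_{(k)}\subseteq\bM\}$. The pigeonhole step is then a one--line telescoping argument: if $\Delta_{n-j}/\Delta_{n-j-1}>2$ held for every $j=1,\dots,\kappa:=\lceil\log_2\tau\rceil$, multiplying would give $\Delta_{n-1}/\Delta_{n-\kappa-1}>2^\kappa\geq\tau$, contradicting $\Delta_{n-1}\leq\tau$ and $\Delta_{n-\kappa-1}\geq 1$. So some $l^*\leq\kappa$ has $\Delta_{n-l^*}/\Delta_{n-l^*-1}\leq 2$. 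The second key point is that one must go beyond Desnanot--Jacobi: apply the full Sylvester identity (Lemma~\ref{lemma_sylvester}) to a submatrix $\bA$ realising $\Delta_{n-l^*-1}$, then Laplace--expand the resulting $(l^*+1)\times(l^*+1)$ determinant along one row and recognise each cofactor, again by Sylvester, as $(\det\bA)^{l^*-1}$ times an $(n-1)$--minor of $\bM$. This yields
\[
|\det\bM|\;\leq\;(l^*+1)\cdot\frac{\Delta_{n-l^*}}{\Delta_{n-l^*-1}}\cdot\Delta_{n-1}\;\leq\;(\kappa+1)\cdot 2\cdot\tau,
\]
which is exactly the claimed bound. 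The ``dichotomy'' you were looking for is thus replaced by a direct existence argument for $l^*$, and the factor $\lceil\log_2\tau+1\rceil$ arises simply as the number of terms in the Laplace expansion, not from any delicate case split.
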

	\begin{proof}
		Suppose that the matrix $\bM$ is invertible, otherwise we are done. Let $\Delta_k = \max\left\lbrace \left|\det\bA_{(k)}\right| : \bA_{(k)}\subseteq\bM\right\rbrace$ for all $k=0,\ldots,n$ and $\kappa = \lceil \log_2\tau\rceil$. If $\kappa = 0$, we have $\tau = 1$ and, thus, $\Delta_{n-1}/\Delta_{n-2}\leq 1$ as $\Delta_{n-2}\geq 1$ and $\Delta_{n-1} \leq \tau = 1$. So the Desnanont-Jacobi identity (\ref{eq_desnanont_jacobi}) applied to some invertible $(n-2)\times (n-2)$ submatrix implies 
		\begin{align*}
			\left|\det\bM\right|\leq 2 \cdot \Delta_{n-1}/\Delta_{n-2}\cdot \Delta_{n-1}\leq 2\cdot\Delta_{n-1}\leq 2\cdot \tau.
		\end{align*}
		
		For the remainder of the proof, we suppose that $1\leq\kappa$. Observe that $\kappa + 1\leq n$. Assume that $\Delta_{n - j}/\Delta_{n - j - 1}> 2$ for all $j=1,\ldots,\kappa$. This yields 
		\begin{align*}
			\prod_{j=1}^{\kappa} \Delta_{n-j}/\Delta_{n-j-1}> 2^{\kappa}\geq\tau.
		\end{align*} 
		However, we also have
		\begin{align*}
			\prod_{j=1}^{\kappa} \Delta_{n-j}/\Delta_{n-j-1} = \Delta_{n-1}/\Delta_{n-\kappa-1}\leq \tau
		\end{align*}
		as $\Delta_{n-\kappa - 1}\geq 1$ and $\Delta_{n-1}\leq\tau$, which is a contradiction. So we know there exists an index $l^*\in\lbrack \kappa\rbrack$ such that $\Delta_{n - l^*}/\Delta_{n-l^*-1}\leq 2$, where we have $n - l^* - 1\geq 0 $ by construction. Let $\bA:=\bA_{(n - l^* - 1)}\subseteq\bM$ attain $\Delta_{n - l^* - 1}$. Then applying Sylvester's determinant identity, Lemma \ref{lemma_sylvester}, to $\bA$ gives us
		\begin{align*}
			\det\bM\cdot(\det\bA)^{l^*} = \det\underbrace{\begin{pmatrix}
				\det \bA[i_1,j_1]& \ldots & \det \bA[i_1,j_{l^*+1}] \\
				\vdots & \ddots & \vdots \\
				\det \bA\lbrack i_{l^*+1}, j_1\rbrack & \ldots & \det \bA\lbrack i_{l^*+1},j_{l^*+1}\rbrack
			\end{pmatrix}}_{=\bD}
		\end{align*}
		for suitable sets $I$ and $J$. 
		Dividing by $(\det\bA)^{l^*}$ and applying Laplace expansion to the first row on the right hand side yields
		\begin{align*}
			\det\bM = \frac{1}{(\det\bA)^{l^*}}\cdot \sum_{k = 1}^{l^*+1}(-1)^{k}\cdot\det\bA[i_1,j_k]\cdot\det\bD_{\backslash 1,\backslash k}.
		\end{align*}
		Observe that $\det\bD_{\backslash 1,\backslash k} = \det\bM_{\backslash i_1,\backslash j_k}\cdot(\det\bA)^{l^* - 1}$ by Lemma \ref{lemma_sylvester}. Hence,
		\begin{align*}
			\det\bM = \frac{1}{\det\bA}\cdot\sum_{k = 1}^{l^* + 1}(-1)^{k}\cdot\det\bA[i_1,j_k]\cdot\det\bM_{\backslash i_1,\backslash j_k}.
		\end{align*}
		Taking absolute values and using that $\left|\det\bA\right| = \Delta_{n - l^* - 1}$ gives 
		\begin{align*}
			\left|\det\bM\right| \leq (l^*+1) \cdot\Delta_{n-l^*}/\Delta_{n-l^*-1}\cdot\Delta_{n-1}\leq (\kappa + 1) \cdot 2\cdot \tau.
		\end{align*}
		The claim follows from $\kappa = \lceil \log_2\tau\rceil$.\qed
	\end{proof}
	
	Let $\polyS\subseteq \polyring$ be finite. If $\polyM$ is totally $\polyS$-modular over $\polyring$, then $\bM(\intvar)$ is totally $S(\intvar)$-modular for every $\intvar\in\Z$. 
	This raises the question of whether totally $S(\intvarfixed)$-modular matrices over $\Z$ for a fixed value $\intvarfixed\in\Z$ are also totally $\polyS$-modular over $\polyring$, i.e., totally $S(\intvar)$-modular for all $\intvar\in\Z$. This is in general not true: Let $\polyS = \pm\lbrace 0,\indet,\indet+1,2\indet+1\rbrace\subseteq \polyring$ and $\intvarfixed = 1$. We define the matrix
	\begin{align*}
		\polyM = \begin{pmatrix} \indet & \indet+1 & \indet & \indet\\ 
		\indet+1 & \indet+1 & \indet+1 & \indet \\ 
		\indet & \indet+1 & \indet+1 & \indet+1 \\ 
		\indet & \indet & \indet+1 & \indet 
		\end{pmatrix}\in\polyring^{4\times 4}.
	\end{align*}
	One can check that $\bM(1)$ is totally $S(1)$-modular. However, the matrix $\polyM$ satisfies $\det\polyM = 1 \notin \polyS$. Nevertheless, if we evaluate at some $\intvar\in\Z\backslash\lbrace -2,-1,0,1\rbrace$, we avoid this issue for this particular matrix since $\det\bM(\intvar) = 1\notin S(\intvar)$. One of our main results in this section states that this is a general phenomenon. 
	Let 
	\begin{align*}
		I(\polyS) = \lbrace \intvar\in\Z : s(\intvar) = f(\intvar)\text{ for some }s\in \polyS \text{ and }f\in F(\polyS)\rbrace
	\end{align*}
	denote the set of integer valued intersections between the polynomial functions given by the elements in $\polyS$ and $F(\polyS)$, the set of all polynomials that arise as a determinant of some forbidden minor for $\polyS$.
	
	\begin{lemma}\label{lemma_existence_equal_sets}
		Let $\polyS\subseteq \polyring$ be finite and $\intvar\in\Z\backslash I(\polyS)$. Then $\bM(\intvar)$ is totally $S(\intvar)$-modular if and only if $\polyM$ is totally $\polyS$-modular over $\polyring$. Also, the matrix $\bM(\intvar)$ is a forbidden minor for $S(\intvar)$ if and only if $\polyM$ is a forbidden minor for $\polyS$. Furthermore, the set $I(\polyS)$ is finite.
	\end{lemma}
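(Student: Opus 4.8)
The plan is to prove the three assertions in turn, resting the two equivalences on two facts: evaluation at $\intvar$ is a ring homomorphism $\polyring\to\Z$ and hence commutes with taking determinants, and any matrix over $\polyring$ with entries in $\polyS$ that fails to be totally $\polyS$-modular contains a genuine forbidden minor. I would begin with the finiteness of $I(\polyS)$. By Lemma~\ref{lemma_finite_number_of_determinants} the set $F(\polyS)$ is finite, and by Definition~\ref{def_tot_S_mod_forbidden_minor} the determinant of every forbidden minor lies outside $\polyS$, so $F(\polyS)\cap\polyS=\emptyset$ and no $s\in\polyS$ equals any $f\in F(\polyS)$ as a polynomial. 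Hence, for each of the finitely many pairs $(s,f)\in\polyS\times F(\polyS)$ the polynomial $s-f\in\polyring$ is nonzero and has only finitely many integer roots, so $I(\polyS)$, being the union of these finitely many finite sets, is finite.

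For the modularity equivalence, the ``if'' direction holds for every $\intvar\in\Z$ and uses nothing about $I(\polyS)$: if $\polyM$ is totally $\polyS$-modular, then each $k\times k$ subdeterminant of $\bM(\intvar)$ equals $p(\intvar)$ for the corresponding subdeterminant $p\in\polyS$ of $\polyM$, hence lies in $S(\intvar)$. For the ``only if'' direction I would argue by contraposition. Assuming the entries of $\polyM$ lie in $\polyS$, if $\polyM$ is not totally $\polyS$-modular, I would choose a square submatrix $\bA$ of $\polyM$ of minimum size with $\det\bA\notin\polyS$. Then $\bA$ has size $l\geq2$, and by minimality every $(l-1)\times(l-1)$ submatrix of $\bA$ is totally $\polyS$-modular, so $\bA$ is a forbidden minor for $\polyS$ and $\det\bA\in F(\polyS)$. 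Since $\intvar\notin I(\polyS)$, we get $(\det\bA)(\intvar)\neq s(\intvar)$ for every $s\in\polyS$; as $(\det\bA)(\intvar)$ is a subdeterminant of $\bM(\intvar)$, the matrix $\bM(\intvar)$ is not totally $S(\intvar)$-modular.

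For the forbidden-minor equivalence, I would fix $\polyM\in\polyring^{l\times l}$ with $l\geq2$. Applying the modularity equivalence to each $(l-1)\times(l-1)$ submatrix of $\polyM$, whose evaluation at $\intvar$ is the corresponding submatrix of $\bM(\intvar)$, shows that all $(l-1)\times(l-1)$ submatrices of $\polyM$ are totally $\polyS$-modular if and only if all $(l-1)\times(l-1)$ submatrices of $\bM(\intvar)$ are totally $S(\intvar)$-modular. Under this common hypothesis, $\det\polyM\in\polyS$ yields $\det\bM(\intvar)=(\det\polyM)(\intvar)\in S(\intvar)$ at once, whereas $\det\polyM\notin\polyS$ makes $\polyM$ itself a forbidden minor, so $\det\polyM\in F(\polyS)$ and, since $\intvar\notin I(\polyS)$, $\det\bM(\intvar)\notin S(\intvar)$. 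Combining the two equivalences, $\polyM$ is a forbidden minor for $\polyS$ precisely when $\bM(\intvar)$ is a forbidden minor for $S(\intvar)$.

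The step I expect to be the main obstacle is the ``only if'' half of the modularity equivalence: one must extract an \emph{honest} forbidden minor from a non-totally-$\polyS$-modular $\polyM$ (the minimum-size witness serves, since all strictly smaller minors then automatically lie in $\polyS$) and then invoke $\intvar\notin I(\polyS)$ to rule out a collision at $\intvar$ between its determinant and an allowed value. This is also the one place where the standing assumption that the entries of $\polyM$ lie in $\polyS$ is used: a single forbidden entry is a $1\times1$ obstruction that no $f\in F(\polyS)$ records, so $\intvar\notin I(\polyS)$ by itself would not preclude a collision there. Everything else is bookkeeping; the ``if'' directions are automatic because evaluation commutes with determinants, and the finiteness of $I(\polyS)$ reduces to $F(\polyS)\cap\polyS=\emptyset$ together with Lemma~\ref{lemma_finite_number_of_determinants}.
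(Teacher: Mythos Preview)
Your proof is correct and follows essentially the same approach as the paper's: both argue the ``only if'' direction by extracting a forbidden minor inside a non-totally-$\polyS$-modular $\polyM$ and then using $\intvar\notin I(\polyS)$ to rule out a collision with any $s\in\polyS$, and both derive finiteness of $I(\polyS)$ from the finiteness of $F(\polyS)$ established in Lemma~\ref{lemma_finite_number_of_determinants}. Your explicit remark that the entries of $\polyM$ must lie in $\polyS$ (so that the minimal offending subdeterminant has size at least~$2$ and hence is a genuine forbidden minor in the sense of Definition~\ref{def_tot_S_mod_forbidden_minor}) is a point of care the paper leaves implicit.
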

	\begin{proof}
		Note that the statement about the forbidden minors follows directly from the first statement about totally $\polyS$-modular matrices. Therefore, we only need to prove the first part of the statement. 
		Recall that every totally $\polyS$-modular matrix over $\polyring$ is totally $S(\intvar)$-modular for every evaluation at $\intvar\in\Z$. So it suffices to show the other direction. 
		
		A totally $S(\intvar)$-modular matrix that is not the evaluation of a totally $\polyS$-modular matrix over $\polyring$ contains a forbidden minor $\polyM$ for $\polyS$ by definition. This forbidden minor $\polyM$ satisfies $\det\bM(\intvar)\in S(\intvar)$ and $\det\polyM\notin \polyS$. In other words, let $f\in F(\polyS)$ be the polynomial corresponding to $\det\polyM$, then $f\notin \polyS$ but $f(\intvar) = s(\intvar)$ for some $s\in \polyS$. Thus, we obtain $\intvar\in I(\polyS)$, a contradiction. So $\polyM$ does not contain a forbidden minor for $\polyS$ and is therefore totally $\polyS$-modular. Since $\polyS$ and $F(\polyS)$ are finite, see Lemma \ref{lemma_finite_number_of_determinants}, we get that there are only finitely many of those intersections. \qed
	\end{proof}
	
	We aim to make statements about matrices with entries being polynomials of degree at most one, that is, matrices given by $\polyM = \bT + \indet\cdot \bR\in\polyring^{n\times n}$ for integral $\bT$ and $\bR$. The following result relates the rank of $\bR$ to the largest degree among the polynomials in a given set $\polyS$. We write $\deg(s)$ for the degree of $s\in \polyring$.

	\begin{lemma}
		\label{lemma_rank_coefficient_matrix}
		Let $S\subseteq \polyring$ be finite. Let $\polyM = \bT + \indet\cdot \bR \in\polyring^{m\times n}$ be totally $\polyS$-modular for integral $\bT$ and $\bR$. Then we have $\rank\bR\leq \max_{s\in \polyS}\deg(s)$. 
	\end{lemma}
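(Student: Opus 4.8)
The plan is to prove the bound directly by exhibiting a single subdeterminant of $\polyM$ whose degree, as a polynomial in $\indet$, is exactly $\rank\bR$. Write $r=\rank\bR$ and $d=\max_{s\in\polyS}\deg(s)$; I may assume $r\geq 1$, since otherwise there is nothing to prove. Since $\bR$ is an integral matrix of rank $r$, I would first pick index sets $I\subseteq[m]$ and $J\subseteq[n]$ with $|I|=|J|=r$ such that the submatrix $\bR'$ of $\bR$ supported on rows $I$ and columns $J$ satisfies $\det\bR'\neq 0$. Letting $\bT'$ be the corresponding submatrix of $\bT$ and $\polyM'=\bT'+\indet\cdot\bR'$ the corresponding $r\times r$ submatrix of $\polyM$ (a legitimate subdeterminant, since $r\leq\min\{m,n\}$), the goal becomes to show $\deg(\det\polyM')=r$.

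For that key step, I would expand $\det\polyM'$ by the Leibniz formula over bijections $\sigma\colon I\to J$:
\[
\det\polyM' \;=\; \sum_{\sigma}\operatorname{sgn}(\sigma)\prod_{i\in I}\bigl((\bT')_{i,\sigma(i)}+\indet\,(\bR')_{i,\sigma(i)}\bigr).
\]
Expanding each product of $r$ linear factors, the coefficient of $\indet^{r}$ in $\det\polyM'$ is exactly $\sum_{\sigma}\operatorname{sgn}(\sigma)\prod_{i\in I}(\bR')_{i,\sigma(i)}=\det\bR'\neq 0$, so $\det\polyM'$ has degree precisely $r$ in $\indet$.

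Finally, since $\polyM$ is totally $\polyS$-modular, $\det\polyM'\in\polyS$, and hence $r=\deg(\det\polyM')\leq\max_{s\in\polyS}\deg(s)=d$, which is the claim. The only place requiring any care is the computation of the leading coefficient of $\det(\bT'+\indet\bR')$, but that is just the standard identity that this coefficient equals $\det\bR'$, so I do not expect a genuine obstacle; the argument is otherwise pure bookkeeping and does not even invoke the lexicographic order or the earlier Sylvester-type tools.
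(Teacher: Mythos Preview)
Your proposal is correct and follows essentially the same approach as the paper: both pick an $r\times r$ submatrix $\polyM'=\bT'+\indet\bR'$ with $\det\bR'\neq 0$ and argue that $\deg(\det\polyM')=r$, whence $r\leq\max_{s\in\polyS}\deg(s)$ since $\det\polyM'\in\polyS$. The only cosmetic difference is that the paper first writes $\det\polyM'=\det\bR'\cdot\det(\bR'^{-1}\bT'+\indet\bm{I})$ and then applies the Leibniz formula to the second factor, whereas you apply Leibniz directly to $\bT'+\indet\bR'$; both routes identify the leading coefficient as $\det\bR'$.
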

	\begin{proof}
		Let $r = \rank \bR$ and $\bR'\in\Z^{r\times r}$ be an invertible submatrix of $\bR$ and $\bM' = \bT' + \indet \cdot \bR'$ the corresponding submatrix of $\bM$.  
		We get
		\begin{align*}
			\det\bM' = \det\left(\bT' + \indet \cdot \bR'\right) = \det\bR'\cdot\det\left(\bR'^{-1}\bT' + \indet \cdot \bm{I}\right),
		\end{align*}
		where $\bm{I}$ denotes the unit matrix. Using the Leibniz formula for the determinant $\det(\bR'^{-1}\bT' + \indet \cdot \bm{I})$, we observe that there exists only one term with degree $r$, namely the term corresponding to the identity permutation, and no term with degree larger than $r$. Thus, the right hand side in the equation above is a polynomial of degree $r$. So $\det\polyM'$ is also a polynomial of degree $r$. Since the matrix $\polyM'$ is totally $\polyS$-modular, we have that $r = \deg(\det\polyM')\leq \max_{s\in \polyS}\deg(s)$.\qed
	\end{proof}
	
	So, if $\max_{s\in \polyS}\deg(s) = 1$ and $\bR$ is non-zero, then Lemma~\ref{lemma_rank_coefficient_matrix} implies that $\rank\bR = 1$. Hence, we can express $\bR$ as a rank-1 update and obtain $\bM = \bT + \indet\cdot \bu\cdot \bv^\top$ for some integral $\bu$ and $\bv$. 
	Given such a matrix $\bM = \bT + \indet\cdot \bu\cdot \bv^\top$ without any restriction on its subdeterminants, it is possible to establish that subdeterminants of $\polyM$ are indeed polynomials of degree at most one. This is a consequence of the well-known matrix determinant lemma for rank-1 updates. A variant of this lemma is stated below. 
	\begin{lemma}[Matrix determinant lemma]
		\label{lemma_matrix_determinant_lemma}
		Let $\polyM = \bT+ \indet \cdot\bu\cdot\bv^\top\in\polyring^{n\times n}$ for integral $\bT$, $\bu$, and $\bv$. Then we have
		\begin{align*}
			\det \polyM = \left(\det\bT - \det\left(\bT- \bu\cdot\bv^\top\right)\right)\cdot \indet + \det\bT.
		\end{align*}
	\end{lemma}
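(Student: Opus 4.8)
The plan is to first show that $\det\polyM$, regarded as a polynomial in the indeterminate $\indet$ with coefficients in $\Z$, has degree at most one, and then to determine its two coefficients by evaluating at $\indet = 0$ and $\indet = -1$.

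For the degree bound I would expand the determinant using multilinearity in the columns. The $j$-th column of $\polyM = \bT + \indet\cdot\bu\cdot\bv^\top$ is the $j$-th column of $\bT$ plus $\indet\, v_j\, \bu$, where $v_j$ denotes the $j$-th entry of $\bv$. Expanding $\det\polyM$ by multilinearity over the $2^n$ choices of taking, in each column, either its $\bT$-part or its $\indet\, v_j\,\bu$-part, every resulting term in which at least two columns contribute their $\bu$-part vanishes, since those columns are then both scalar multiples of $\bu$ and hence linearly dependent. So only the term selecting every $\bT$-part survives with degree $0$ (contributing $\det\bT$) together with the $n$ terms selecting exactly one $\bu$-column, each of which is linear in $\indet$. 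Hence $\det\polyM = \alpha\indet + \beta$ for some $\alpha,\beta\in\Z$.

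Since the determinant is a polynomial in the matrix entries, substituting $\indet = a$ into the polynomial $\det\polyM$ gives exactly $\det(\bT + a\cdot\bu\cdot\bv^\top)$ for every $a\in\Z$. Evaluating at $\indet = 0$ yields $\beta = \det\bT$, and evaluating at $\indet = -1$ yields $-\alpha + \beta = \det(\bT - \bu\cdot\bv^\top)$, so $\alpha = \det\bT - \det(\bT - \bu\cdot\bv^\top)$. Combining these gives $\det\polyM = \bigl(\det\bT - \det(\bT - \bu\cdot\bv^\top)\bigr)\cdot\indet + \det\bT$, as claimed.

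There is no genuine obstacle here; the only point needing a little care is the degree-at-most-one claim, and this is precisely where the rank-$1$ structure (two columns proportional to $\bu$) enters, mirroring the role of Lemma~\ref{lemma_rank_coefficient_matrix}. An alternative route is to invoke the classical matrix determinant lemma $\det(\bT + \indet\,\bu\cdot\bv^\top) = \det\bT + \indet\,\bv^\top\,\mathrm{adj}(\bT)\,\bu$ over $\Q(\indet)$ and then rewrite $\bv^\top\,\mathrm{adj}(\bT)\,\bu$ using the $\indet = -1$ evaluation, but the multilinearity argument is self-contained and avoids any assumption on the invertibility of $\bT$.
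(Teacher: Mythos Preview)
The paper does not actually supply a proof of this lemma; it is stated as a well-known variant of the classical matrix determinant lemma for rank-$1$ updates and then used without further justification. Your argument is correct and self-contained: the multilinearity expansion in the columns shows that every term picking two or more $\bu$-columns vanishes (those columns being proportional to $\bu$), so $\det\polyM$ is linear in $\indet$, and the two-point interpolation at $\indet=0$ and $\indet=-1$ then pins down the coefficients exactly as stated.
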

	This is the essential ingredient to prove Theorem \ref{thm_linear_form_decomposition}. 
	
	\begin{proof}[of Theorem \ref{thm_linear_form_decomposition}]
		Let $\polyM = \bT + \indet \cdot\bu\cdot\bv^\top$ be totally $\polyS$-modular. Then Lemma \ref{lemma_matrix_determinant_lemma} implies that $\det\polyM = \lambda_1\indet+\lambda_0$ for $\lambda_0,\lambda_1\in\Z$ and $\det\bT = \lambda_0$. This holds for all such totally $\polyS$-modular matrices. So the claim follows.\qed
	\end{proof}
	We will also use an implication of Theorem \ref{thm_linear_form_decomposition} for matrices over $\Z$. 
	\begin{corollary}\label{cor_decomposition_evaluation}
		Let $\polyS\subseteq\polyring$ be finite and $\intvar\in \Z\backslash I(\polyS)$. If $\bM(\intvar) = \bT + \intvar\cdot\bu\cdot\bv^\top$ with integral $\bT$, $\bu$, and $\bv$ is totally $S(\intvar)$-modular, then the matrix $\bT$ is totally $\polyS(0)$-modular.
	\end{corollary}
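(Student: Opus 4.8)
The plan is to lift the given integral decomposition to a decomposition over $\polyring$ and then apply the two preceding results in sequence. Given $\bM(\intvar) = \bT + \intvar\cdot\bu\cdot\bv^\top$ with $\bT,\bu,\bv$ integral, I would set $\polyM := \bT + \indet\cdot\bu\cdot\bv^\top\in\polyring^{n\times n}$, using the very same integral matrices $\bT,\bu,\bv$; then $\polyM$ evaluated at $\indet=\intvar$ is precisely $\bM(\intvar)$, so no re-derivation of a decomposition is required and we have a concrete polynomial matrix in hand.

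Next I would argue that $\polyM$ is totally $\polyS$-modular over $\polyring$. By hypothesis $\polyM(\intvar) = \bM(\intvar)$ is totally $S(\intvar)$-modular, and since $\intvar\in\Z\backslash I(\polyS)$, Lemma~\ref{lemma_existence_equal_sets}, invoked for this particular $\polyM$, converts total $S(\intvar)$-modularity of the evaluation into total $\polyS$-modularity of $\polyM$ itself. Only this (easier) direction of the equivalence in Lemma~\ref{lemma_existence_equal_sets} is needed here.

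Finally, Theorem~\ref{thm_linear_form_decomposition} applies verbatim to $\polyM = \bT + \indet\cdot\bu\cdot\bv^\top$: total $\polyS$-modularity of $\polyM$ forces $\bT$ to be totally $\polyS(0)$-modular, which is exactly the assertion. There is essentially no obstacle in this argument; the corollary is a packaging of Lemma~\ref{lemma_existence_equal_sets} and Theorem~\ref{thm_linear_form_decomposition}, and the only point worth a sentence of care is that the hypothesis ``$\bM(\intvar) = \bT + \intvar\cdot\bu\cdot\bv^\top$'' already supplies the polynomial matrix we need, so the passage between $\Z$ and $\polyring$ costs nothing.
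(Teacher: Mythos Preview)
Your proposal is correct and is essentially identical to the paper's own proof: define $\polyM=\bT+\indet\cdot\bu\cdot\bv^\top$, invoke Lemma~\ref{lemma_existence_equal_sets} (using $\intvar\notin I(\polyS)$) to pass from total $S(\intvar)$-modularity of $\bM(\intvar)$ to total $\polyS$-modularity of $\polyM$, and then apply Theorem~\ref{thm_linear_form_decomposition}. One minor slip of wording: the direction of Lemma~\ref{lemma_existence_equal_sets} you use---from $S(\intvar)$-modularity of the evaluation back to $\polyS$-modularity of $\polyM$---is the \emph{non-trivial} direction (it is the one that requires $\intvar\notin I(\polyS)$), not the easier one.
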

	\begin{proof}
		Since $\intvar\in \Z\backslash I(\polyS)$, we know that every totally $S(\intvar)$-modular matrix $\bM(\intvar) = \bT + \intvar\cdot\bu\cdot\bv^\top$ corresponds to a totally $\polyS$-modular matrix $\polyM=\bT + \indet \cdot\bu\cdot\bv^\top$ over $\polyring$ by Lemma \ref{lemma_existence_equal_sets}. From Theorem \ref{thm_linear_form_decomposition} it follows that $\bT$ is totally $\polyS(0)$-modular.\qed
	\end{proof}

	\section{Properties of $I(\polyS)$}
	Following Lemma \ref{lemma_existence_equal_sets}, we demonstrate how to calculate all intersections between the polynomial functions in a given set $\polyS$ and the polynomial functions in $F(\polyS)$. 
	We start by proving a lemma which holds under more general assumptions. 
	For that purpose, we remark that the ring $\polyring$ is a unique factorization domain. So every element in $\polyring$ admits a unique factorization into smaller irreducible elements up to multiplying with $\pm 1$. Therefore, the notion of greatest common divisors carries naturally over from $\Z$ to $\polyring$. Analogously to $\Z$, elements in $\polyring$ are relatively prime if their greatest common divisor is $1$. We call a matrix $\polyM\in\polyring^{m\times n}$ \emph{totally unimodular} if every subdeterminant is contained in $\pm\lbrace 0, 1\rbrace$.
	\begin{lemma}\label{lemma_to_diff_determinants}
		Let $\polyS \subseteq\polyring$ be such that all non-zero $y,z\in \polyS$ are relatively prime or $\left|y\right| = \left|z\right|$ and $2\notin \polyS$. Let $n\geq3$ and $\polyM\in\polyring^{n\times n}$ be invertible and every $(n-1)\times (n-1)$ submatrix of $\polyM$ is totally $\polyS$-modular. Then either $\polyM$ is totally unimodular or there exist invertible submatrices $\bA_{(n-2)},\tilde{\bA}_{(n-2)}\subseteq\bM$ such that
		\begin{align*}
			\left|\det\bA_{(n-2)}\right|\neq\bigl|\det\tilde{\bA}_{(n-2)}\bigr|. 
		\end{align*}
	\end{lemma}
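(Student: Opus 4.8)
The plan is to argue by contradiction. Assume $\polyM$ is not totally unimodular; we must produce two invertible $(n-2)\times(n-2)$ submatrices with different absolute determinants, so suppose towards a contradiction that every invertible $\bA_{(n-2)}\subseteq\polyM$ has one and the same absolute determinant $d$. Since $n\ge 3$, every subdeterminant of $\polyM$ of order at most $n-1$ lies inside some totally $\polyS$-modular $(n-1)\times(n-1)$ submatrix and hence lies in $\polyS$; invertibility of $\polyM$ gives an invertible $\bA_{(n-2)}$, so $d\ne 0$. The two workhorses are the Desnanot--Jacobi identity~(\ref{eq_desnanont_jacobi}) and the elementary fact that a nonsingular $k\times k$ submatrix of a matrix of rank $\ge k+1$ extends to a nonsingular $(k+1)\times(k+1)$ submatrix; iterating, every nonsingular submatrix of $\polyM$ grows to nonsingular submatrices of all larger sizes up to $n$.

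First I would show $d$ is a non-unit. If $|d|=1$, all $(n-2)$-subdeterminants lie in $\pm\{0,1\}$; plugging this into~(\ref{eq_desnanont_jacobi}) for a nonsingular $\bA_{(n-3)}$ inside a nonsingular $\bA_{(n-1)}$ bounds $|\det\bA_{(n-1)}|\le 2$, and since $\det\bA_{(n-1)}\in\polyS$ with $2\notin\polyS$ (and $\polyS$ avoiding units) this gives $\det\bA_{(n-1)}\in\pm\{0,1\}$. Descending in the same way forces every subdeterminant of $\polyM$ of order $\le n-1$ into $\pm\{0,1\}$, so each $(n-1)\times(n-1)$ submatrix is totally unimodular and has all entries in $\pm\{0,1\}\cap\polyS$; as no unit lies in $\polyS$, all entries of $\polyM$ vanish, contradicting invertibility since $n\ge 3$. (In particular the ``totally unimodular'' alternative of the statement is the borderline case $|d|=1$.) Hence $d$ is a non-unit, so $d>1$ in the lexicographic order and $d^{\,n-1}>2$.

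The core step is to show that \emph{every} nonzero subdeterminant of $\polyM$ of order $\le n-1$ has absolute value exactly $d$. Take a nonsingular $\bA_{(n-1)}$ and, when $n\ge 4$, a nonsingular $\bA_{(n-3)}\subseteq\bA_{(n-1)}$; identity~(\ref{eq_desnanont_jacobi}) inside $\bA_{(n-1)}$ reads $\det\bA_{(n-1)}\cdot\det\bA_{(n-3)}=c_1c_4-c_2c_3$ with each $c_i$ an $(n-2)$-subdeterminant, so $c_i\in\{0,\pm d\}$; hence $d^2\mid\det\bA_{(n-1)}\det\bA_{(n-3)}$ while $|\det\bA_{(n-1)}\det\bA_{(n-3)}|\in\{d^2,2d^2\}$. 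Were $\det\bA_{(n-3)}$ coprime to $d$, then $d^2\mid\det\bA_{(n-1)}$ and $|\det\bA_{(n-3)}|\le 2$, forcing a unit or $\pm 2$ into $\polyS$ — impossible; so one may choose $\bA_{(n-3)}$ with $\det\bA_{(n-3)}$ an associate of $d$, and then the identity leaves $|\det\bA_{(n-1)}|\in\{d,2d\}$, the value $2d$ being ruled out because an element of $\polyS$ sharing the non-unit factor $d$ with an $(n-2)$-subdeterminant (also in $\polyS$) but failing to be an associate of it would violate the hypothesis on $\polyS$. Thus every nonsingular $(n-1)$-subdeterminant has absolute value $d$; the same computation one level down gives absolute value $d$ at level $n-3$, and then at levels $n-4,\dots,1$ by an immediate downward induction, so in particular every nonzero entry of $\polyM$ has absolute value $d$. (For $n=3$ this step collapses to a one-liner: the $2\times2$ subdeterminants then have absolute value $d^2$, so $d$ and $d^2$ both lie in $\polyS$ yet are non-coprime and non-associate — an immediate contradiction, finishing that case.)

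It remains to close $n\ge 4$. Since all entries of $\polyM$ lie in $\{0,\pm d\}$, $\det\polyM$ is $d^{\,n}$ times an integer, so $d^{\,n}\mid\det\polyM$ and, by invertibility, $|\det\polyM|\ge d^{\,n}$. On the other hand~(\ref{eq_desnanont_jacobi}) applied to $\polyM$ and a nonsingular $\bA_{(n-2)}$ of absolute determinant $d$ writes $\det\polyM\cdot(\pm d)=s_1s_4-s_2s_3$ with each $s_i$ an $(n-1)$-subdeterminant of absolute value $0$ or $d$, whence $|\det\polyM|\le 2d$. Combining gives $d^{\,n-1}\le 2$, contradicting $d>1$ and $n\ge 3$. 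I expect the main difficulty to be the bookkeeping of the third paragraph — arranging the successive extensions of nonsingular submatrices so that~(\ref{eq_desnanont_jacobi}) applies cleanly at each order, and steering the divisibility/coprimality case analysis in $\polyring$ so that $2\notin\polyS$, the absence of units in $\polyS$, and the ``relatively prime or equal absolute value'' dichotomy are each invoked exactly where they are needed; the degenerate dimension $n=3$ also needs its own short treatment, as indicated.
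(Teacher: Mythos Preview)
Your proposal contains a genuine and load-bearing gap: you repeatedly assume that $\pm 1\notin\polyS$ (``$\polyS$ avoiding units'', ``as no unit lies in $\polyS$'', ``forcing a unit \ldots into $\polyS$ --- impossible''), but this is not among the hypotheses of the lemma. Indeed, the lemma is applied later in the paper with $\polyS=\pm\{0,1,\indet,\indet+1,2\indet+1\}$, which does contain $\pm 1$. The assumption is used in two essential places. First, in your treatment of $|d|=1$ you conclude that all entries of $\polyM$ vanish; without the assumption this step simply fails. Note also that the case $|d|=1$ is not supposed to yield a contradiction at all: it is precisely the case in which one must \emph{establish} the totally-unimodular alternative of the lemma, and that requires its own argument (the paper handles the base case $n=3$ by citing the classical fact that a $\{0,\pm1\}$-matrix whose $2\times 2$ subdeterminants avoid $\pm 2$ is totally unimodular). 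Second, in your core step you dismiss the branch $\gcd(\det\bA_{(n-3)},d)=1$ on the grounds that it would put a unit into $\polyS$; since $\pm 1\in\polyS$ is permitted, this does not close the branch. (That branch can in fact be closed differently: $d^2\mid\det\bA_{(n-1)}$ together with $\det\bA_{(n-1)},d\in\polyS$ and the coprime-or-equal-absolute-value hypothesis forces $|\det\bA_{(n-1)}|=|d|$, hence $d^2\mid d$, impossible since $d$ is a non-unit. But you did not write this.)

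Beyond the gap, your overall route differs from the paper's. The paper argues by induction on $n$: assuming all invertible $(n-2)$-minors share one absolute value $s$, it applies the induction hypothesis to an invertible $\bA_{(n-1)}$ to extract an invertible $(n-3)$-minor whose absolute determinant differs from $|s|$, and then a single Desnanot--Jacobi computation inside $\bA_{(n-1)}$ forces $\det\bA_{(n-1)}\in\pm\{s^2,2s^2\}/\det\bA_{(n-3)}$, which is incompatible with the hypotheses on $\polyS$. You instead try a direct, non-inductive argument: propagate the common value $d$ to every order $\le n-1$ and then squeeze $|\det\polyM|$ between $d^{\,n}$ and $2d$. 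Even after repairing the coprimality branch as above, your scheme still needs a correct replacement for the $|d|=1$ paragraph before it can stand.
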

	\begin{proof}
		Let $n=3$. The $(n-2)\times (n-2)$ submatrices of $\bM$ are the entries of $\bM$. Assume that every entry of $\bM$ is in $\pm\lbrace 0, s\rbrace$ for some $s\in S$. If $s=1$, we get that $\bM\in\pm\lbrace 0,1\rbrace^{3\times 3}$. Since $2\notin S$, the matrix $\bM$ is totally unimodular, cf. \cite[Theorem 19.3]{schrijvertheorylinint86}. If we assume $s\neq 1$, we have  $\det\bA_{(2)}\in\pm\lbrace s^2, 2s^2\rbrace$ for an invertible submatrix $\bA_{(2)}\subseteq\bM$ as every entry is contained in $\pm\lbrace 0, s\rbrace$. Since the non-zero elements in $S$ are pairwise relatively prime and $s\in S$, we get $\det\bA_{(2)}\notin S$, contradicting that $\bA_{(2)}$ is totally $S$-modular. 
		
		So we suppose that $n\geq 4$. Let $\bM$ not be totally unimodular. Again, we assume that every invertible $(n-2)\times (n-2)$ submatrix has determinant $s$ or $-s$ for some $s\in S$. By the induction hypothesis applied to some invertible submatrix $\bA_{(n-1)}\subseteq\bM$, there exists an invertible submatrix $\bA_{(n-3)}\subseteq\bA_{(n-1)}$ with $\bigl|\det\bA_{(n-3)}\bigr|\neq  \left|s\right|$. 
		We apply the Desnanont-Jacobi identity, (\ref{eq_desnanont_jacobi}), to $\bA_{(n-3)}\subseteq \bA_{(n-1)}$ and obtain that 
		\begin{align*}
			\det\bA_{(n-1)} = \frac{1}{\det\bA_{(n-3)}} \left( s_1s_2-s_3s_4\right)\in\pm\frac{1}{\det\bA_{(n-3)}}\cdot\lbrace s^2, 2s^2\rbrace
		\end{align*}
		with $s_i\in\pm\lbrace 0,s\rbrace$ for $i=1,2,3,4$. If $\left|\det\bA_{(n-3)}\right| \neq 1$, then $\det\bA_{(n-3)}$ does not divide $s^2$ or $2s^2$ as the non-zero elements in $S$ are pairwise relatively prime and $2\notin S$. However, this implies that $\det\bA_{(n-1)}\notin \polyring$, a contradiction. If $\left|\det\bA_{(n-3)}\right| =1$, we get that $\det\bA_{(n-1)}\in \pm\lbrace s^2,2s^2\rbrace$. This gives us a contradiction since $\det\bA_{(n-1)}$ and $s$ are not relatively prime. \qed
		
	\end{proof}
	
	We are in the position to determine the set of possible determinants given by the forbidden minors for a specific set $\polyS$. We showcase this for $\polyS=\pm\lbrace 0,1,\indet,\indet+1,2\indet+1\rbrace$.
	\begin{lemma}\label{lemma_determinants_minors_1_a_a+1_2a+1}
		Let $\polyS=\pm\lbrace 0,1,\indet,\indet+1,2\indet+1\rbrace$ and $D$ be the set of all determinants attained by a $2\times 2$ forbidden minor for $\polyS$. Then we have
		\begin{align*}
			F(\polyS)\subseteq \pm\lbrace 2,\indet-1,\indet+2,2\indet,2\indet+ 2,3\indet+1,3\indet+2,4\indet+2\rbrace\cup D.
		\end{align*}
	\end{lemma}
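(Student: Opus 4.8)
The plan is to split according to the size $l$ of a forbidden minor $\bM\in\polyring^{l\times l}$ for $\polyS$. If $l=2$, then $\det\bM\in D$ by the very definition of $D$, and there is nothing more to do. So assume $l\geq 3$; the goal is then to show $\det\bM\in\pm\lbrace 2,\indet-1,\indet+2,2\indet,2\indet+2,3\indet+1,3\indet+2,4\indet+2\rbrace$. First observe that $\det\bM\neq 0$, since $0\in\polyS$ but $\det\bM\notin\polyS$; in particular $\bM$ is invertible. Moreover every entry of $\bM$ is a $1\times 1$ subdeterminant of some $(l-1)\times(l-1)$ submatrix, hence lies in $\polyS$, so $\bM$ has entries of degree at most one and we may write $\bM=\bT+\indet\cdot\bR$ with $\bT,\bR$ integral.

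The key structural step is to show that $\det\bM$ has degree at most one. Every $(l-1)\times(l-1)$ submatrix of $\bM$ is totally $\polyS$-modular, so by Lemma~\ref{lemma_rank_coefficient_matrix} the corresponding $(l-1)\times(l-1)$ submatrix of $\bR$ has rank at most $\max_{s\in\polyS}\deg(s)=1$. Since $l-1\geq 2$, every $2\times 2$ submatrix of $\bR$ is contained in some such submatrix and therefore has vanishing determinant, whence $\rank\bR\leq 1$. If $\bR=0$ then $\det\bM=\det\bT$ is constant; otherwise $\bM=\bT+\indet\cdot\bu\cdot\bv^\top$ for integral $\bu,\bv$, and Lemma~\ref{lemma_matrix_determinant_lemma} gives $\det\bM=\alpha\indet+\beta$ with $\alpha,\beta\in\Z$, where $\beta=\det\bM(0)$ and $\alpha=\det\bM(0)-\det\bM(-1)$.

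It remains to restrict $\alpha$ and $\beta$. Evaluating the set $\polyS$ gives $\polyS(0)=\polyS(-1)=\pm\lbrace 0,1\rbrace$, so for $a\in\lbrace 0,-1\rbrace$ every $(l-1)\times(l-1)$ submatrix of $\bM(a)$ is totally unimodular; hence $\bM(a)$ is either totally unimodular or a forbidden minor for $\pm\lbrace 0,1\rbrace$, and in both cases $\left|\det\bM(a)\right|\leq 2$ by Lemma~\ref{lemma_finite_number_of_determinants}. Thus $\beta\in\lbrace 0,\pm 1,\pm 2\rbrace$ and $\beta-\alpha=\det\bM(-1)\in\lbrace 0,\pm 1,\pm 2\rbrace$. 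Running through the finitely many pairs $(\beta,\beta-\alpha)$ and forming $\alpha\indet+\beta$ produces a short explicit list of polynomials; discarding those that lie in $\polyS$ (which $\det\bM$ avoids) leaves precisely $\pm\lbrace 2,\indet-1,\indet+2,2\indet,2\indet+2,3\indet+1,3\indet+2,4\indet+2\rbrace$, which finishes the case $l\geq 3$ and hence the proof.

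I expect the only genuine subtlety to be the degree bound $\deg(\det\bM)\leq 1$: it is what converts the problem into a finite check on the two evaluations $\det\bM(0)$ and $\det\bM(-1)$, and it relies on applying Lemma~\ref{lemma_rank_coefficient_matrix} simultaneously to all $(l-1)\times(l-1)$ submatrices (legitimate exactly because $l\geq 3$), together with the fortunate fact that $\polyS(0)=\polyS(-1)=\pm\lbrace 0,1\rbrace$, so that these two evaluations are tightly controlled. The concluding enumeration over the pairs $(\beta,\beta-\alpha)$ is routine.
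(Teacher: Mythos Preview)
Your argument is correct and takes a genuinely different route from the paper. The paper establishes the degree bound $\deg(\det\bM)\leq 1$ via the Desnanot--Jacobi identity together with Lemma~\ref{lemma_to_diff_determinants}: if $\deg(\det\bM)\geq 2$, then every invertible $(n-2)\times(n-2)$ submatrix would have to have determinant $\pm 1$, forcing $\bM$ to be totally unimodular. It then enumerates, with computer assistance, all degree-$\leq 1$ solutions of $\det\bM\cdot\det\bA_{(n-2)}=s_1s_2-s_3s_4$ over $\polyS$, obtaining a longer list, and finally invokes Lemma~\ref{lemma_to_diff_determinants} a second time to discard those values that would force all invertible $(n-2)\times(n-2)$ subdeterminants to coincide in absolute value. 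Your approach replaces both uses of Lemma~\ref{lemma_to_diff_determinants} and the computer search: you get the degree bound directly from Lemma~\ref{lemma_rank_coefficient_matrix} applied to the $(l-1)\times(l-1)$ submatrices, and you pin down $\det\bM$ by evaluating at the two integers $0$ and $-1$ where $\polyS$ collapses to $\pm\{0,1\}$, so that $\lvert\det\bM(0)\rvert,\lvert\det\bM(-1)\rvert\leq 2$ by Lemma~\ref{lemma_finite_number_of_determinants}. This yields the final list immediately, with no over-generation and no subsequent pruning. Your method is more elementary and self-contained for this particular $\polyS$, exploiting the coincidence $\polyS(0)=\polyS(-1)=\pm\{0,1\}$; the paper's template, on the other hand, is more portable to other sets $\polyS$ (as in Remark~\ref{rem_intersections_1_x_x+1_2x+1}) where no two convenient evaluation points may exist.
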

	\begin{proof}
		Let $n\geq 3$ and $\polyM$ be a forbidden minor. There exists an invertible submatrix $\bA_{(n-2)}\subseteq\polyM$. Applying the Desnanont-Jacobi identity (\ref{eq_desnanont_jacobi}), we get
		\begin{align*}
			\det\polyM\cdot\det\bA_{(n-2)} = s_1s_2 - s_3s_4
		\end{align*}
		with $s_i\in \polyS$ for $i=1,2,3,4$. This equation needs to have a solution for $\det\polyM$ in $\polyring$. Suppose that $\det\polyM$ has degree larger than one. This implies $\left|\det\bA_{(n-2)}\right| = 1$, an equality that has to hold for every invertible $(n-2)\times (n-2)$ subdeterminant. By Lemma \ref{lemma_to_diff_determinants}, we deduce that $\polyM$ is totally unimodular and, thus, not a forbidden minor. So we assume $\det\bM$ has degree at most one. Using suitable software such as SageMath, one can enumerate all feasible solutions to the Desnanont-Jacobi identity with degree at most one. This gives us the following feasible values for $\det\polyM$ up to a sign:
		\begin{align*}
			2\polyS\backslash 0 \cup \lbrace \indet-1,\indet+2,2\indet-1,2\indet+3,3\indet+1,3\indet+2,4\indet,4\indet+1,4\indet+3,4\indet+4\rbrace.
		\end{align*}
		However, it is possible to show that the values $\pm\lbrace 2\indet-1,2\indet+3,4\indet,4\indet+1,4\indet+3,4\indet+4\rbrace$ for $\det\polyM$ can only appear for a unique choice of $\left|\det\bA_{(n-2)}\right|$. Therefore, if there exists a forbidden minor with such a determinant, every invertible $(n-2)\times (n-2)$ submatrix needs to have the same determinant in absolute value. This contradicts Lemma \ref{lemma_to_diff_determinants}. Hence, we can exclude these values and obtain the determinants from the statement.\qed
	\end{proof}
	Computing the intersections of the elements in $\polyS=\pm\lbrace 0,1,\indet,\indet+1,2\indet+1\rbrace$ and the set in Lemma \ref{lemma_determinants_minors_1_a_a+1_2a+1} yields that $I(\polyS)\subseteq\lbrace -3,\ldots,2\rbrace$ in Corollary \ref{cor_decomposition_evaluation}. We remark that it is possible to show that both sets are indeed equal.
	
	\begin{remark}
		\label{rem_intersections_1_x_x+1_2x+1}
		For the set $\polyS = \pm \lbrace 0,\indet,\indet + 1,2\indet + 1\rbrace$, one can show analogously to the proof of Lemma~\ref{lemma_determinants_minors_1_a_a+1_2a+1} that $F(S) \subseteq \pm\lbrace 1,2\indet, 2\indet + 2, 3\indet + 1, 3\indet + 2, 4\indet + 2\rbrace \cup D$ and $I(S) = \lbrace -2,-1,0,1\rbrace$, where $D$ is the set of all determinants attained by a $2\times 2$ forbidden minor for $\polyS$.
	\end{remark}

	\section{Proofs of Theorems \ref{thm_recognition} and \ref{thm_opt_a_a_a+1}}
	For both cases, $S=\pm\lbrace 0,1,\indet,\indet+1,2\indet+1\rbrace$ and $S=\pm\lbrace 0,\indet,\indet+1,2\indet+1\rbrace$, we recover that $S(\intvar)=\pm\lbrace 0,1\rbrace$ if $\intvar \in \lbrace -1,0\rbrace$. 
	Since one can optimize $\ILP(\bM(\intvar),\bb,\bc)$ in polynomial time for totally unimodular constraint matrices, see, for instance, \cite[Chapter 19]{schrijvertheorylinint86}, and recognize totally unimodular matrices in polynomial time \cite[Chapter 20]{schrijvertheorylinint86}, both theorems hold in this case. In the following, we prove Theorems \ref{thm_recognition} and \ref{thm_opt_a_a_a+1} for the other values of $\intvar$.
	
	\subsection{Proof of Theorem \ref{thm_recognition}}\label{ss_recognition_proof}
	Recall that $S=\pm\lbrace 0,1,\indet,\indet+1,2\indet+1\rbrace$. By Lemma~\ref{lemma_rank_coefficient_matrix}, every totally $\polyS$-modular matrix $\polyM$ admits a decomposition into $\polyM = \bT + \indet\cdot\bm{u}\cdot\bm{v}^\top$ for some suitable integral valued $\bT$, $\bu$, and $\bv$.
	One can test in polynomial time whether such a decomposition exists. 
	To finish the proof of Theorem~\ref{thm_recognition}, we use the complete characterization given below and the fact that one can recognize totally unimodular matrices in polynomial time; see \cite[Chapter 20]{schrijvertheorylinint86}. 
	\begin{lemma}
		\label{lemma_t_bar_t_tu}
		Let $\bT$, $\bu$,  and $\bv$ be integral. The matrix $\polyM = \bT + \indet \cdot\bm{u}\cdot\bm{v}^\top$ is totally $\polyS$-modular if and only if $\bT$ and $\bT - \bu\cdot\bv^\top$ are totally unimodular.
	\end{lemma}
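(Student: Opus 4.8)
The plan is to derive both directions directly from the matrix determinant lemma (Lemma~\ref{lemma_matrix_determinant_lemma}) together with the two evaluations at $\indet = 0$ and at $\indet = -1$. The key numerical facts are that $\polyS(0) = \pm\lbrace 0,1\rbrace$ and $\polyS(-1) = \pm\lbrace 0,-1\rbrace = \pm\lbrace 0,1\rbrace$, since $0$ and $\indet$ vanish at $0$ while $1,\indet+1,2\indet+1$ all have constant term $1$, and evaluating $0,1,\indet,\indet+1,2\indet+1$ at $-1$ gives $0,1,-1,0,-1$.

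For the forward direction, suppose $\polyM = \bT + \indet\cdot\bu\cdot\bv^\top$ is totally $\polyS$-modular. Theorem~\ref{thm_linear_form_decomposition} then says $\bT$ is totally $\polyS(0)$-modular, i.e.\ totally unimodular. Moreover $\bT - \bu\cdot\bv^\top$ is exactly $\bM(-1)$, and as remarked after Theorem~\ref{thm_bound_minor} the evaluation $\bM(-1)$ of a totally $\polyS$-modular matrix over $\polyring$ is totally $\polyS(-1)$-modular, hence totally unimodular as well.

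For the backward direction, suppose $\bT$ and $\bT - \bu\cdot\bv^\top$ are totally unimodular. Let $\polyM' = \bT' + \indet\cdot\bu'\cdot\bv'^\top$ be an arbitrary $k\times k$ submatrix of $\polyM$; here $\bT'$ is the corresponding submatrix of $\bT$ and, since restricting a rank-one update to a fixed set of rows and columns is again a rank-one update, $\bT' - \bu'\cdot\bv'^\top$ is the corresponding submatrix of $\bT - \bu\cdot\bv^\top$. By Lemma~\ref{lemma_matrix_determinant_lemma}, $\det\polyM' = (a - b)\indet + a$ where $a := \det\bT' \in \lbrace -1,0,1\rbrace$ and $b := \det(\bT' - \bu'\cdot\bv'^\top) \in \lbrace -1,0,1\rbrace$. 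Enumerating the nine possibilities for the pair $(a,b)$ shows $\det\polyM'$ is always one of $0,\pm 1,\pm\indet,\pm(\indet+1),\pm(2\indet+1)$, i.e.\ an element of $\polyS$; for instance $(a,b) = (1,-1)$ yields $2\indet+1$ and $(a,b) = (0,1)$ yields $-\indet$. Hence every subdeterminant of $\polyM$ lies in $\polyS$.

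The argument is essentially bookkeeping, so there is no serious obstacle. The one point that needs care is the observation that a square submatrix of $\bT + \indet\cdot\bu\cdot\bv^\top$ again has the form $\bT' + \indet\cdot\bu'\cdot\bv'^\top$ with $\bT' - \bu'\cdot\bv'^\top$ the matching submatrix of $\bT - \bu\cdot\bv^\top$; this is what lets us feed the same index sets into both total-unimodularity hypotheses simultaneously. The remaining nine-case check is finite and routine.
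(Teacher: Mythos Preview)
Your proof is correct and follows essentially the same route as the paper's: both directions rest on the matrix determinant lemma together with the evaluations at $\indet=0$ and $\indet=-1$, and the nine-case enumeration for the converse. The only cosmetic difference is that the paper first reduces to square matrices and computes $\det\polyM$ directly, whereas you spell out the submatrix argument and appeal to Theorem~\ref{thm_linear_form_decomposition} for the evaluation at $0$; the underlying ideas are identical.
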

	\begin{proof}
		We abbreviate $\bar{\bT} = \bT - \bu\cdot\bv^\top$. It suffices to show the statement for square matrices. So we assume without loss of generality that $m = n$. 
		Let $\bT$ and $\bar{\bT}$ be totally unimodular. We have
		\begin{align}\label{eq_proof_recognition_matrix_determinant_lemma}
			\det\polyM = \left(\det\bT - \det\bar{\bT}\right)\cdot \indet + \det\bT
		\end{align}
		by Lemma \ref{lemma_matrix_determinant_lemma}. So $\det\polyM$ is completely determined by $\det\bT$ and $\det\bar{\bT}$. 
		As $\det\bT$ and $\det\bar{\bT}$ are both in $\pm\lbrace 0, 1\rbrace$, we obtain that $\det\polyM\in \pm\lbrace 0, 1, \indet, \indet + 1, 2\indet + 1\rbrace = \polyS$ by going through all feasible cases. The other direction follows directly by evaluating $\polyS$, $\polyM$, and (\ref{eq_proof_recognition_matrix_determinant_lemma}) at $\indet\in\lbrace -1,0\rbrace$.\qed
	\end{proof}
	By Lemma \ref{lemma_existence_equal_sets} and $I(\polyS)\subseteq\lbrace -3,\ldots,2\rbrace$ from the previous section, we can now derive Theorem~\ref{thm_recognition} by replacing $S$ with $S(\intvar)$ for  $\intvar\in\Z\backslash\lbrace-3,-2,-1,0,1,2\rbrace$.
	
	\subsection{Proof of Theorem \ref{thm_opt_a_a_a+1}}\label{ss_optimization_proof}
	Following Remark~\ref{rem_intersections_1_x_x+1_2x+1}, we fix some $\intvar\in\Z\backslash\lbrace -2,-1,0,1\rbrace$. We use the decomposition $\bM(\intvar) = \bT + \intvar\cdot\bm{u}\cdot\bm{v}^\top$ for integral $\bu,\bv$ from Lemma~\ref{lemma_rank_coefficient_matrix} where $\bT$ is totally unimodular by Corollary~\ref{cor_decomposition_evaluation}. By multiplying rows and columns with $-1$, we assume without loss of generality that $\bu$ and $\bv$ are non-negative. Observe that this already implies that the entries of $\bT$ are in $\lbrace 0,1\rbrace$ since the entries of $\bM(\intvar)$ are in $S(\intvar)$. 
	Let without loss of generality $\bM(\intvar)$ have no zero column or row. This implies that no entry of $\bu$ and $\bv$ equals zero. 
	If there exists an entry of $\bM(a)$ which is $2\intvar + 1$, then due to the rank-1 update the whole row and/or column containing this entry have entries $2\intvar + 1$. 
	If there exists a row of $\bM(a)$ whose entries are all $2\intvar + 1$, we can divide this row by $\intvar/(2\intvar + 1)$ and round down the right hand side. So we can always assume that $\bu = \bm{1}$. 
	If there are multiple columns whose entries are all $2\intvar + 1$, we aggregate them. Hence, we assume that there is at most one column with entries $2\intvar + 1$. So we can select $\bv\in \lbrace 1,2\rbrace^{n}$ where at most one entry of $\bv$ is $2$. 
	We suppose without loss of generality that $\bv_1\in\lbrace 1,2\rbrace$ and $\bv_{\backslash 1} = \bm{1}$. 
	Let the first column of $\bM(a)$ be $\bM(a)_{\cdot,1}$. Notice that $\bM(a)_{\cdot,\backslash 1}$ is a matrix with entries in $\lbrace \intvar,\intvar + 1\rbrace$. We set $y = \bm{1}^\top\tilde{\bx}$ for $\bx = (\bx_1, \tilde{\bx})^\top$ to reformulate $\ILP(\bM(\intvar),\bb,\bc)$ in the form of the mixed integer linear program
	\begin{align*}
		\max \bc^\top\bx \ \text{ s.t. } \ \bT_{\backslash 1,\cdot}\tilde{\bx} \leq\bb - \bx_1 \cdot \bM(a)_{\cdot,1} - \intvar\cdot\bu\cdot y, \ \bm{1}^\top\tilde{\bx} = y, \ \bx_1,y\in\Z.
	\end{align*}
	We next show that the constraint matrix, the transpose of $[\bT_{\backslash 1,\cdot}| \bm{1}]$, is totally unimodular. If this is true, then a solution of the mixed integer linear program in two integer variables can be obtained in polynomial time using Lenstra's algorithm \cite{lenstraintprogr83} or its improved successors \cite{dadush2012latticealgo,kannan1987integeropt,reisroth2023flatness}. It corresponds to a solution of $\ILP(\bM(\intvar),\bb,\bc)$. 
	
	Consider $[\bT_{\backslash 1,\cdot}| \bm{1}]$. The matrix $\bT_{\backslash 1, \cdot}$ is totally unimodular since its a submatrix of $\bT$. Let $\bT'$ be an invertible submatrix of the constraint matrix that contains the row $\bm{1}$. 
	By adding $\intvar\cdot\bm{1}$ to every row of $\bT'$ that is not the all-ones row, we obtain a new matrix $\bM(\intvar)'$ that contains a submatrix of $\bM(a)_{\cdot, \backslash 1}$, whose entries are in $\lbrace \intvar,\intvar + 1\rbrace$, and one all-ones row. Since each submatrix of $\bM(\intvar)$ is totally $S(\intvar)$-modular, we obtain $\pm 1= \det\bM(\intvar)' = \det\bT'$ by Theorem~\ref{thm_append_all_one} below. 
	So $[\bT_{\backslash 1,\cdot}| \bm{1}]$ is totally unimodular. 
	\begin{theorem}\label{thm_append_all_one}
		Let $S = \pm \lbrace 0,\indet,\indet + 1, 2\indet + 1\rbrace\subseteq\polyring$ and $n\geq 2$. Let $\polyM\in\lbrace \indet,\indet+1\rbrace^{n\times (n-1)}$ be totally $\polyS$-modular. Then $\det[\polyM|\bm{1}]\in\pm\lbrace 0,1\rbrace$.
	\end{theorem}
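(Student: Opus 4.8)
The plan is to reduce the assertion to a statement about $0/1$-matrices and then run an induction on $n$ built on the theory of $F(\polyS)$ from Section~4. Write $\polyM = \bT + \indet\cdot\bm{1}\bm{1}^\top$ with $\bT = \polyM(0)\in\{0,1\}^{n\times(n-1)}$. Applying Theorem~\ref{thm_linear_form_decomposition} with $\bu=\bv=\bm{1}$ shows $\bT$ is totally $\polyS(0)$-modular, hence totally unimodular since $\polyS(0)=\pm\{0,1\}$; likewise $\polyM(-1)=\bT-\bm{1}\bm{1}^\top$ is the evaluation at $-1$ of the totally $\polyS$-modular matrix $\polyM$ and $\polyS(-1)=\pm\{0,1\}$, so $\polyM(-1)$ is totally unimodular as well. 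Since $\det[\polyM_{R,C'}\,|\,c\bm{1}]$ is linear in its last column and subtracting $\indet$ times that column from the others cancels the $\indet\bm{1}\bm{1}^\top$-part, one obtains
\begin{align*}
\det[\polyM_{R,C'}\,|\,c\bm{1}] = c\cdot\det[\bT_{R,C'}\,|\,\bm{1}] \qquad \text{for all } c\in\polyring
\end{align*}
and every submatrix $\polyM_{R,C'}$ of $\polyM$ with $|R|=|C'|+1$. In particular $\det[\polyM\,|\,\bm{1}]=\det[\bT\,|\,\bm{1}]\in\Z$, so it suffices to prove $|\det[\bT\,|\,\bm{1}]|\le 1$.

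I would argue by induction on $n$; for $n=2$ the determinant equals $\bT_{11}-\bT_{21}\in\{-1,0,1\}$. Let $n\ge 3$ and assume the statement for all smaller sizes. Set $N:=[\polyM\,|\,(\indet+1)\bm{1}]\in\{\indet,\indet+1\}^{n\times n}$, so $\det N=(\indet+1)\det[\bT\,|\,\bm{1}]$. Every proper square submatrix of $N$ that omits the last column is a submatrix of $\polyM$, hence has determinant in $\polyS$; one that uses the last column equals $[\polyM_{R,C'}\,|\,(\indet+1)\bm{1}]$ for a proper submatrix of $\polyM$, so by the displayed identity and the induction hypothesis its determinant lies in $(\indet+1)\cdot\{-1,0,1\}\subseteq\polyS$. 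Therefore $N$ is either totally $\polyS$-modular or a forbidden minor for $\polyS$.

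If $N$ is totally $\polyS$-modular, then $(\indet+1)\det[\bT\,|\,\bm{1}]=\det N\in\polyS$, and the only elements of $\polyS$ divisible by $\indet+1$ are $0$ and $\pm(\indet+1)$ (since $\indet$ and $2\indet+1$ are coprime to $\indet+1$), whence $\det[\bT\,|\,\bm{1}]\in\{-1,0,1\}$ and we are done. If $N$ is a forbidden minor, then $\det N\in F(\polyS)$; writing $\det N=k(\indet+1)$ with $k=\det[\bT\,|\,\bm{1}]\in\Z$, this is a polynomial multiple of $\indet+1$ of degree at most one, and by Remark~\ref{rem_intersections_1_x_x+1_2x+1} the only such element of $F(\polyS)$ is $\pm(2\indet+2)$ --- here one uses that every $2\times2$ matrix over $\polyS$ whose determinant has degree at most one already has its determinant in $\polyS$, so the set $D$ of $2\times2$ forbidden-minor determinants contributes nothing of degree at most one. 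Hence $\det[\bT\,|\,\bm{1}]=\pm2$, and this single case is what must be excluded.

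So suppose $|\det[\bT\,|\,\bm{1}]|=2$. Then $[\bT\,|\,\bm{1}]$ is a $\{0,1\}$-matrix with an all-one column that is not totally unimodular, yet all of whose proper submatrices are totally unimodular (those omitting the last column are submatrices of $\bT$; those using it have determinant in $\pm\{0,1\}$ by the induction hypothesis). Ruling out this configuration is the heart of the matter, and I foresee two routes. The combinatorial route: $\bT$ has no constant row or column, since an all-zero or all-one column makes $[\bT\,|\,\bm{1}]$ singular, an all-zero row $i$ gives $\det[\bT\,|\,\bm{1}]=\pm\det\bT_{\backslash i,\cdot}\in\{-1,0,1\}$, and an all-one row $i$ gives $\det[\bT\,|\,\bm{1}]=\pm\det\polyM(-1)_{\backslash i,\cdot}\in\{-1,0,1\}$ after subtracting row $i$ from the others --- each contradicting $|\det[\bT\,|\,\bm{1}]|=2$; then, subtracting a suitably chosen row of $\bT$ from the remaining rows of $[\bT\,|\,\bm{1}]$ and expanding the last column, one writes the resulting $(n-1)\times(n-1)$ determinant --- using $\bm{1}-\bT_{\cdot j}=-\polyM(-1)_{\cdot j}$ and multilinearity --- as a signed sum of minors of $\bT$ and of minors $\det[\bT_{R,C'}\,|\,\bm{1}]$, all in $\{-1,0,1\}$, and argues that the simultaneous total unimodularity of $\bT$ and of $\polyM(-1)=\bT-\bm{1}\bm{1}^\top$ forbids these from summing to $\pm2$. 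The forbidden-minor route: since the nonzero elements of $\polyS$ are pairwise relatively prime and $2\notin\polyS$, Lemma~\ref{lemma_to_diff_determinants} applies to the forbidden minor $N$, which is not totally unimodular, yielding two invertible $(n-2)\times(n-2)$ submatrices of $N$ with differing absolute determinant; applying the Desnanot--Jacobi identity~(\ref{eq_desnanont_jacobi}) to each, and using that the last column of $N$ is constant equal to $\indet+1$ so that the relevant cofactors are not arbitrary elements of $\polyS$ but of the restricted shape $[\polyM_{R,C'}\,|\,(\indet+1)\bm{1}]$, should produce incompatible polynomial identities $\pm(2\indet+2)\det\bA_{(n-2)}=s_1s_2-s_3s_4$. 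Carrying either of these final arguments through is the main obstacle I anticipate.
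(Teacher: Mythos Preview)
Your reduction is sound and matches the paper's opening move: you multiply the all-ones column by $\indet+1$, observe that the resulting $N=[\polyM\,|\,(\indet+1)\bm{1}]$ has all proper subdeterminants in $\polyS$ (you obtain this cleanly by induction; the paper argues more loosely that $N$ must \emph{contain} a forbidden minor using the constant column), and conclude via Remark~\ref{rem_intersections_1_x_x+1_2x+1} that the only obstruction is $\det N=\pm(2\indet+2)$. The remaining task---ruling out a forbidden minor in $\{\indet,\indet+1\}^{n\times n}$ with a constant column and determinant $\pm 2(\indet+1)$---is precisely where you stop, and it is the core of the paper's argument.

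The paper does \emph{not} apply Lemma~\ref{lemma_to_diff_determinants} to $N$ itself. Instead it fixes an invertible $(n-1)\times(n-1)$ submatrix $\bA_{(n-1)}$ of $N$ that avoids the constant column and works inside it. For any invertible $\bA_{(n-2)}\subseteq\bA_{(n-1)}$, the Desnanot--Jacobi identity~(\ref{eq_desnanont_jacobi}) for $\bA_{(n-2)}\subseteq N$ has two of its four cofactors---those obtained by adding back the constant column---lying in $\pm\{0,\indet+1\}$; after checking that neither can vanish, cancelling $\indet+1$ leaves $\pm 2\det\bA_{(n-2)}=\pm s\pm s'$ with $s,s'\in\polyS$ the two remaining cofactors. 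Since the nonzero elements of $\polyS$ are pairwise coprime, the only solution is $|s|=|s'|=|\det\bA_{(n-2)}|$, and one of $s,s'$ is exactly $\det\bA_{(n-1)}$. Hence every invertible $(n-2)$-minor of $\bA_{(n-1)}$ has absolute value $|\det\bA_{(n-1)}|$. For $n=3$ this forces all entries of $\bA_{(2)}$ to coincide, so $\det\bA_{(2)}=0$, a contradiction. For $n\ge 4$ a second application of~(\ref{eq_desnanont_jacobi}), now to any invertible $\bA_{(n-3)}\subseteq\bA_{(n-1)}$, shows every invertible $(n-3)$-minor of $\bA_{(n-1)}$ also has absolute value $|\det\bA_{(n-1)}|$; \emph{now} Lemma~\ref{lemma_to_diff_determinants}, applied to $\bA_{(n-1)}$, forces $\bA_{(n-1)}$ to be totally unimodular, whence $\det\bA_{(n-1)}=\pm 1\notin\polyS$, the final contradiction.

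Your sketched ``forbidden-minor route'' has the right ingredients but the wrong orientation: applying Lemma~\ref{lemma_to_diff_determinants} to $N$ first yields two $(n-2)$-minors with different absolute values, but the resulting Desnanot--Jacobi identities need not clash, because the free cofactors $s,s'$ vary with the choice of $\bA_{(n-2)}$. The point is to work inside a single $\bA_{(n-1)}$ that avoids the constant column, derive uniformity of its lower-order minors first, and only then invoke Lemma~\ref{lemma_to_diff_determinants}. Your ``combinatorial route'' is not developed enough to assess, and the paper does not pursue it.
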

	\begin{proof}
		For the purpose of deriving a contradiction, assume that the statement does not hold. We multiply the all-ones column with $\indet + 1$. So we deduce that $\det[\polyM|(\indet+1)\cdot\bm{1}]\notin \pm\lbrace 0,\indet+1\rbrace$ by assumption. 
		Note that $\det[\polyM|(\indet+1)\cdot\bm{1}]$ is divisible by $\indet+1$. So we obtain that $\det[\polyM|(\indet+1)\cdot\bm{1}]\notin \polyS$. Thus, the matrix $[\polyM|(\indet+1)\cdot\bm{1}]$ contains a forbidden minor for $\polyS$. As $\polyM$ is totally $\polyS$-modular, the forbidden minor contains the all-$(\indet+1)$'s column. For the remainder of the proof, we show that no forbidden minor with entries in $\lbrace\indet,\indet + 1\rbrace$ contains an all-$(\indet + 1)$ column or row, which finishes the proof of the theorem.
		
		One can verify that for $n=2$ no forbidden minor with entries in $\lbrace\indet,\indet + 1\rbrace$ exists. So we assume that $n\geq 3$. 
		Suppose without loss of generality that the last row of $\bM$, which we denote by $\bM_{n,\cdot}$, has greatest common divisor larger than $1$. We derive a contradiction. 
		
		We observe that $\gcd\bM_{n,\cdot}\in \lbrace \indet,\indet + 1\rbrace$. By Remark~\ref{rem_intersections_1_x_x+1_2x+1}, we have $\left|\det\bM\right| = 2\cdot\gcd\bM_{n,\cdot}$ as $\det\bM$ has to be divisible by $\gcd\bM_{n,\cdot}$. 
		Let $\bA_{(n-1)}\subseteq\bM$ be an invertible submatrix not containing the last row. 
		Take an invertible submatrix $\bA_{(n-2)}\subseteq\bA_{(n-1)}$. The Desnanont-Jacobi identity, (\ref{eq_desnanont_jacobi}), applied to $\bA_{(n-2)}$ yields
		\begin{align*}
			2\gcd\bM_{n,\cdot}\det\bA_{(n-2)} = \det\bM\det\bA_{(n-2)} = s_1s_4 - s_2s_3,
		\end{align*}
		where $s_1,s_2\in \polyS$ and $s_3,s_4\in\pm \lbrace 0, \gcd\bM_{n,\cdot}\rbrace$. 
		This equation cannot be satisfied if $s_i = 0$ for some $i=1,2,3,4$. So we assume that $s_i\neq 0$ for all $i=1,2,3,4$. By division with $\gcd\bM_{n,\cdot}$ and assuming that without loss of generality $s_3=\gcd\bM_{n,\cdot}=s_4$, we get $2\cdot\det\bA_{(n-2)} = s_1 - s_2$ for $s_1,s_2\in \polyS\backslash 0$. The only solutions are given when $\bigl|\det\bA_{(n-2)}\bigr|=s_1= - s_2$ or $\bigl|\det\bA_{(n-2)}\bigr|= - s_1= s_2$. 
		Hence, every invertible $(n-2)\times (n-2)$ submatrix of $\bA_{(n-1)}$ has the same determinant as $\bA_{(n-1)}$ in absolute value. 
		If $n=3$, this implies that the $(n-2)\times (n-2)$ subdeterminants correspond to entries. As the entries of $\bM$ are in $\lbrace \indet,\indet+1\rbrace$, this means that all the entries in $\bA_{(n-1)}$ are the same which gives us $\det\bA_{(n-1)} = 0$, a contradiction. So let $n\geq 4$. Take an invertible submatrix $\bA_{(n-3)}\subseteq\bA_{(n-1)}$. We apply the Desnanont-Jacobi identity, (\ref{eq_desnanont_jacobi}), to $\bA_{(n-3)}\subseteq\bA_{(n-1)}$ and get $\det\bA_{(n-3)}\in\pm\lbrace \det\bA_{(n-1)},2\det\bA_{(n-1)}\rbrace$ which yields $\left|\det\bA_{(n-3)}\right| = \left|\det\bA_{(n-1)}\right|$. This holds for all invertible $(n-3)\times (n-3)$ submatrices of $\bA_{(n-1)}$. By Lemma \ref{lemma_to_diff_determinants}, this implies that $\bA_{(n-1)}$ is totally unimodular. However, this contradicts that $\det\bA_{(n-1)}\in \polyS$.\qed		
	\end{proof}	
	
	\section{Finiteness of forbidden minors}\label{sec_finite_fm}
	Recall from the introduction that totally $\pm\lbrace 0,\indet,\indet+1\rbrace$-modular matrices with entries in $\lbrace \indet,\indet+1\rbrace$ are completely characterized by excluding the submatrix (\ref{matrix_forbidden_conflict}). 
	In light of the celebrated Robertson-Seymour theorem \cite{ROBERTSON2004325}, one might ask whether there is always a finite list of forbidden minors for a finite set $\polyS\subseteq\polyring$ and matrices with entries in $\lbrace \indet,\indet+1\rbrace$. This is not the case. 
	An intriguing example with infinitely many forbidden minors is already given by the set $\polyS=\pm\lbrace 0,1,\indet,\indet+1,2\indet+1\rbrace$; see Figure \ref{fig_forbidden minors} for an incomplete list. Interestingly, if one removes the $1$ from $\polyS$ and passes to $\pm\lbrace 0,\indet,\indet+1,2\indet+1\rbrace$, then we are only aware of finitely many forbidden minors; see Figure \ref{fig_forbidden minors}. It is open whether this list is complete. If this is true, this might support the following conjecture for a finite set $\polyS\subseteq\polyring$ such that $s\in S$ implies $-s\in S$: there exists a finite list of forbidden minors if and only if $1\notin \polyS$. 
	
	\begin{figure}
		\includegraphics[scale=.75]{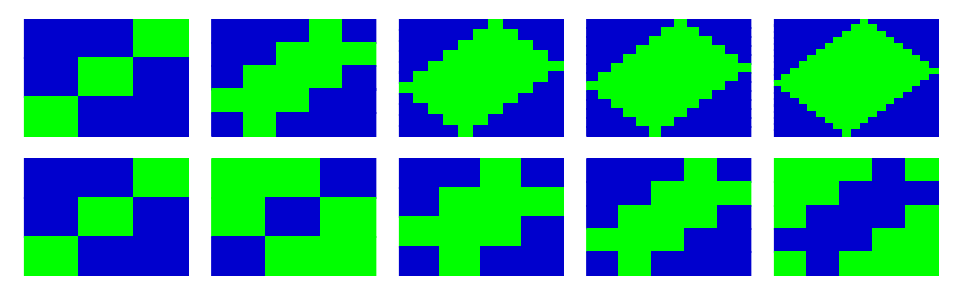}
		\caption{The blue boxes depict the value $\indet$ and the green boxes $\indet+1$ or vice versa. The first row of matrices corresponds to the first five elements of an infinite sequence of matrices that can be obtained by generalizing the existing pattern. It can be shown that those infinitely many matrices are forbidden minors for $\pm\lbrace 0,1,\indet,\indet+1,2\indet+1\rbrace$. The matrices in the last row correspond to five forbidden minors for $\pm \lbrace 0,\indet,\indet+1,2\indet+1\rbrace$.}
		\label{fig_forbidden minors}
	\end{figure}
	
	\subsubsection{Acknowledgements.} The authors are grateful to the reviewers for their valuable suggestions and comments.

	\bibliographystyle{plain}
	\bibliography{references}
\end{document}